\documentclass[10pt,reqno]{amsart}

\usepackage{amsmath,amscd,amssymb,amsthm}
\usepackage{tikz-cd}
\usepackage{comment}

\makeatletter
\def\th@plain{%
  \thm@notefont{}% same as heading font
  \itshape % body font
}
\def\th@definition{% 
  \thm@notefont{}% same as heading font
  \normalfont % body font
}
\makeatother

\usepackage{enumitem}
\setlist[enumerate]{label=(\roman*),leftmargin=0.8cm}
\usepackage{todonotes}
\usepackage{color}

\newtheorem{proposition}{Proposition}[section]
\newtheorem{lemma}[proposition]{Lemma}
\newtheorem{theorem}[proposition]{Theorem}
\newtheorem{corollary}[proposition]{Corollary}

\theoremstyle{definition}
\newtheorem{remark}[proposition]{Remark}
\newtheorem{definition}[proposition]{Definition}

\newtheorem{example}[proposition]{Example}

\numberwithin{equation}{section} \setcounter{tocdepth}{1}

\DeclareMathOperator{\Bl}{Bl}

\DeclareMathOperator{\Aut}{Aut}
\DeclareMathOperator{\Ric}{Ric}

\DeclareMathOperator{\V}{V}

\DeclareMathOperator{\Lie}{Lie}

\DeclareMathOperator{\BS}{BS}

\renewcommand{\L}{\mathcal{L}}
\renewcommand{\phi}{\varphi}

\DeclareMathOperator{\DF}{DF}

\DeclareMathOperator{\Hom}{Hom}

\newcommand{\R}{\mathbb{R}}
\newcommand{\C}{\mathbb{C}}

\newcommand{\pr}{\mathbb{P}}
\renewcommand{\epsilon}{\varepsilon}
\renewcommand{\H}{\mathcal{H}}
\newcommand{\D}{\mathcal{D}}

\newcommand{\scO}{\mathcal{O}}

\newcommand{\X}{\mathcal{X}}

\newcommand{\ddb}{i\partial \bar\partial}

\newcommand{\scY}{\mathcal{Y}}
\newcommand{\mfk}{\mathfrak{k}}
\newcommand{\mft}{\mathfrak{t}}

\newcommand{\scB}{\mathcal{B}}

\renewcommand{\V}{\mathcal{V}}

\newcommand{\Y}{\mathcal{Y}}
\newcommand{\scA}{\mathcal{A}}

\newcommand{\I}{\mathcal{I}}

\DeclareMathOperator{\Vol}{Vol}

\DeclareMathOperator{\wt}{wt}
\newcommand{\E}{\mathcal{E}}

\newcommand{\ddbar}{\partial\bar{\partial}}
\DeclareMathOperator{\Fut}{Fut}
\DeclareMathOperator{\sq}{wt^2}
\DeclareMathOperator{\Hess}{Hess}

\pagestyle{headings} \setcounter{tocdepth}{1}

\title[Extremal K\"ahler metrics on blowups]{Extremal K\"ahler metrics on blowups}

\author[Ruadha\'i Dervan and Lars Martin Sektnan]{Ruadha\'i Dervan and Lars Martin Sektnan}
\address{Ruadha\'i Dervan, School of Mathematics and Statistics, University of Glasgow, University Place, Glasgow G12 8QQ, United Kingdom}\email{ruadhai.dervan@glasgow.ac.uk}

\address{Lars Martin Sektnan, Department of Mathematical Sciences, University of Gothenburg, 412 96 Gothenburg, Sweden and Institut for Matematik, Aarhus University, 8000, Aarhus C, Denmark}
\email{sektnan@chalmers.se}

\begin{document}

\begin{abstract} Consider a compact K\"ahler manifold which either admits an extremal K\"ahler metric, or is a small deformation of such a manifold. We show that the blowup of the manifold at a point admits an extremal K\"ahler metric in K\"ahler classes making the exceptional divisor sufficiently small if and only if it is relatively K-stable, as predicted by the Yau--Tian--Donaldson conjecture. We also give a geometric interpretation of what relative K-stability means in this case in terms of finite dimensional geometric invariant theory.  This gives a complete solution to a problem introduced and solved by Arezzo, Pacard, Singer and Sz\'ekelyhidi for constant scalar curvature K\"ahler metrics in dimension at least three. 
\end{abstract}

\maketitle

\section{Introduction}

A central goal of K\"ahler geometry is to understand the existence of canonical representatives of K\"ahler classes. The natural representatives are \emph{constant scalar curvature K\"ahler (cscK) metrics} and more generally \emph{extremal metrics}. Such metrics do not always exist, and the Yau-Tian-Donaldson conjecture states that the existence of cscK metrics should be equivalent to the algebro-geometric notion of \emph{K-stability} \cite{yau,tian,donaldson}. Similarly the existence of extremal metrics should be equivalent to \emph{relative K-stability} \cite{gabor-extremal}. Despite significant progress, this conjecture is open in general. Furthermore, even when the conjecture is known to hold, the geometric meaning of K-stability is typically unclear.

One of the first constructions of cscK metrics is due to Arezzo-Pacard \cite{arezzo-pacard1,arezzo-pacard2}, who proved results relating to the existence of cscK metrics on blowups of manifolds known to admit cscK metrics. In the absence of automorphisms of the starting manifold, they construct cscK metrics on the blowup using a gluing method. Perhaps the most interesting aspect of their work is that when the starting manifold admits automorphisms, there are algebro-geometric obstructions in their gluing argument to obtaining cscK metrics on the blowup; the obstructions are related to stability of the blown-up point in the sense of geometric invariant theory. Significantly, this was the first general construction of cscK metrics in which algebro-geometric stability enters into the analysis. The problem and analogous results were generalised to the extremal setting by Arezzo--Pacard--Singer \cite{APS}. 

An important problem in the field has since been to characterise the existence of extremal metrics on the blowup through relative K-stability, in line with the Yau-Tian-Donaldson conjecture, and we refer to Pacard \cite{pacard} and Sz\'ekelyhidi  \cite{gabor-icm} for surveys on this problem and for further context. Sz\'ekelyhidi has made substantial progress on this problem, including a complete solution in the cscK case provided the complex dimension is at least three \cite{gabor-blowups1,gabor-blowups2}. Sz\'ekelyhidi's strategy is to produce very strong approximate solutions to the cscK equation on the blowup, and to show that the higher order terms in these approximate solutions can be matched to zeroes of certain moment maps on $X$ itself, which is how geometric invariant theory on $X$ enters. The analysis involved has resisted progress beyond this case, meaning in dimension two (see Datar for work in this direction \cite{datar}), and in the extremal case, which are both open.

We take a new approach to the problem, with which we provide a complete solution in general. Essentially, in the prior approaches one must obtain better and better approximate solutions to the equation of interest on the blowup. Our new strategy avoids this by makes the geometry itself more involved, with the benefit of significantly simplifying the analysis.

To state the main results precisely we require some further notation. Consider a compact K\"ahler manifold $X$, together with a K\"ahler class $\alpha$ which admits an extremal metric $\omega\in\alpha$. For a point $p \in X$, consider the blowup $\sigma: \Bl_p X \to X$ endowed with the K\"ahler class $\alpha_{\epsilon} = \sigma^*\alpha - \epsilon^2 [E]$ with $E$ the exceptional divisor. We fix a maximal compact subgroup $K \subset \Aut_0(X,\alpha)$ and a maximal torus $T\subset K_p$, where $K_p$ is the stabiliser of $p$ in $K$.  Denoting by $K^T$ the centraliser of $T$ in $K$, we then consider a family of moment maps (with $\Delta$ the Laplacian) $$A_{\epsilon}\mu + B_{\epsilon} \Delta\mu: X \to (\mfk^T)^*$$ for the $K^T$-action on $X^T$ (the fixed locus of $T$) with respect to $A_{\epsilon} \omega + B_{\epsilon} \Ric \omega$, where $A_{\epsilon}, B_{\epsilon}$ are functions of $\epsilon$ defined explicitly in Corollary \ref{k-stabhence} with $A_{\epsilon}>0$ and with $B_{\epsilon}$ of strictly higher order in $\epsilon$. We in addition define inner products $\langle \cdot, \cdot \rangle_{\epsilon,q}$ on $\mfk^T = \Lie K^T$ that depend on both $\epsilon$ and $q\in X^T$, and which have an explicit algebro-geometric interpretation.  Our main results can be summarised as follows:

\begin{theorem}\label{intromainthm-stable}
There is an $\epsilon_0>0$ such that for all $\epsilon \in (0, \epsilon_0)$ the following are equivalent:
\begin{enumerate}
\item $(\Bl_p X, \alpha_{\epsilon})$ admits an extremal metric;
\item $(\Bl_p X, \alpha_{\epsilon})$ is relatively K-stable;
\item for every element $u\in \mfk^T$ with $p$ specialising to $q$ and such that $u$ is orthogonal to $\mft=\Lie T$ under $\langle \cdot, \cdot \rangle_{\epsilon,q}$, we have $$A_{\epsilon}h(q) + B_{\epsilon}\Delta h(q)>0,$$ with $u$ having Hamiltonian $h$ with respect to $\omega$.
\end{enumerate}
\end{theorem}

The equivalence of $(i)$ and $(ii)$ proves the analogue of the Yau--Tian--Donaldson conjecture in this setting, while the equivalence with $(iii)$ further gives an explicit geometric interpretation of what relative K-stability means in this setting. When $X$ is projective with $\alpha = c_1(L)$ ample, $(iii)$ can further be understood in terms of completely classical geometric invariant theory. 

Theorem \ref{intromainthm-stable} is due to Sz\'ekelyhidi when both $n\geq 3$ and when one seeks to relate cscK metrics to K-stability; this sharp result is new in the remaining cases (so when either $\dim X=2$ or when one seeks extremal metrics in any dimension). Since the main novelty in our work is our new approach,  which shifts the difficulty from the analysis to the geometry, we outline the approach in detail in Section 2. Briefly, we begin by arguing in a universal manner: rather than a single point, we consider the blowup of all points of $X$ at once, by blowing up the diagonal in $X\times X$. This produces a holomorphic submersion over $X$, where the fibre over $p\in X$ is the blowup $\Bl_pX$; we further obtain a natural $\epsilon$-dependent family of relatively K\"ahler classes  on the total space of this family. 

Instead of involving moment maps on $X$ related to geometric invariant theory in the analysis, we use the moment map property of the scalar curvature directly. This makes the geometric setup more natural from the perspective of the Yau--Tian--Donaldson conjecture. More precisely, the holomorphic submersion structure produces an $\epsilon$-dependent sequence of moment maps on the base $X$ ---viewed as the base of this holomorphic submersion---where the moment map is (a finite-dimensional projection of) the fibrewise scalar curvature \cite{DH}, and there is a suitable variant of this statement in the extremal case due to Hallam. We then use the analysis involved in essentially the simplest case of the Arezzo--Pacard theorem (following the approach of Seyyedali--Sz\'ekelyhidi to argue in a more coordinate-free manner \cite{seyyedali-szekelyhidi}) to reduce the problem to finding a zero of these scalar-curvature moment maps on $X$, again also proving a  suitable variant of this in the extremal case. The point is then that the obstruction to solving the problem becomes (relative) K-stability, and so once the geometry is set up, we see that K-stability (respectively relative K-stability) implies the existence of a cscK (respectively extremal) metric directly and naturally.

This then leaves the task of geometrically interpreting K-stability in this specific situation, for which we follow the lines of Stoppa and Sz\'ekelyhidi \cite{stoppa, stoppa-szekelyhidi, gabor-blowups2}, extending their results to account also for the varying inner product. This produces a geometric interpretation of relative K-stability in terms of geometric invariant theory on $X$ itself, extending and recovering the prior results for cscK manifolds in dimension at least three. We emphasise that although we recover these prior results, in our approach the problem is solved in a different order. One main point of the Yau--Tian--Donaldson conjecture is that relative K-stability should be \emph{easier} to understand than the existence of extremal metrics, and our work is in line with this philosophy.

The techniques we develop are strong enough to also prove the ``semistable case'', which has not been considered before. Work of Stoppa and Stoppa-Sz\'ekelyhidi further implies that if $(X,\alpha)$ is relatively K-unstable, then its blowup is also relatively K-unstable in the classes we consider and hence cannot admit an extremal metric \cite{stoppa,stoppa-szekelyhidi}. Thus with the ``stable case'' settled in Theorem \ref{intromainthm-stable}, the only remaining case is that of a relatively K-semistable manifold.  We now consider a K\"ahler manifold $(X,\alpha)$ which is \emph{analytically relatively K-semistable}; this means that there is a suitably equivariant degeneration of $(X,\alpha)$ to an extremal K\"ahler manifold $(X_0,\alpha_0)$. As the name suggests, the condition implies relative K-semistability \cite{donaldson-lower,stoppa-szekelyhidi,relative}, and can also be seen as asking that $(X,\alpha)$ is a small equivariant deformation of an extremal manifold, so that it is an analytic version of relative K-semistability. We will require that $\alpha = c_1(L)$ for $L$ ample in order to appeal to a result requiring a form of compactness, so that $X$ is projective. 

The difference in the statement below in comparison with the stable case is the setup of the problem: instead of the action of the automorphism group  $(X,\alpha)$ itself, we consider the action of $\Aut_0(X_0,\alpha_0)^T$ on a space $\Y^T$ built from the Kuranishi space of $(X_0,\alpha_0)$ (blowing up the diagonal in a suitable fibre product), where $T$ is similarly a maximal torus lying in the stabiliser of the point $p \in X$ which fixes the extremal vector field on $X$ and $\X_0$ (in a sense which will be made precise after setting up the geometry explicitly). Thus the specialisation of $p$ under $u\in\mfk^T $ will no longer actually be a point on $(X,\alpha)$ itself in general. We prove the following:

\begin{theorem}\label{intromainthm-semistable}
There is an $\epsilon_0>0$ such that for all $\epsilon \in (0, \epsilon_0)$ the following are equivalent:
\begin{enumerate}
\item $(\Bl_p X, \alpha_{\epsilon})$ admits an extremal metric;
\item $(\Bl_p X, \alpha_{\epsilon})$ is relatively K-stable.\end{enumerate}\end{theorem}

We also obtain an analogue  of Theorem \ref{intromainthm-stable} $(iii)$ in the semistable case, involving stability of $p$ viewed as a point of $\Y$ in an explicit, geometric invariant theoretic sense. Although the resulting characterisation is quite technical, it can be understood in simple special cases. Supposing, for example, that the extremal degeneration of $(X,\alpha)$ has automorphism group isomorphic to $\C^*$, we show that Theorem \ref{intromainthm-semistable} implies that one can always find a point on $X$ such that its blowup admits an extremal metric. As we explain, this produces many new concrete examples of manifolds admitting a cscK or extremal metric.

The strategy in the proof of  Theorem \ref{intromainthm-semistable} is similar to that of Theorem \ref{intromainthm-stable}, once the geometry has been setup. We emphasise again that the advantage of our general approach is that the analysis is simplified, at the expense of making the geometry more involved; this is what makes Theorems \ref{intromainthm-stable} and \ref{intromainthm-semistable} tractable.

\subsection*{Acknowledgements} We thank Michael Hallam for explaining how to view the extremal equation as a moment map (described in Section \ref{moment-map-geometry}) to us, and G\'abor Sz\'ekelyhidi for a helpful discussion. RD was funded by a Royal Society University Research Fellowship. LMS was funded by a Marie Sk\l{}odowska-Curie Individual Fellowship, funded from the European Union's Horizon 2020 research and innovation programme under grant agreement No 101028041, and also by Villum Fonden Grant 0019098, while he was a member of Aarhus University. Part of work was completed while RD visited Newcastle University, and he thanks the department and Stuart Hall for their hospitality. 

\section{The main argument}

\subsection{Preliminaries} We recall the basic theory of  \emph{extremal K\"ahler metrics}, for which a reference is Sz\'ekelyhidi \cite{gabor-book}. We let $X$ be a compact K\"ahler manifold of dimension $n$, and let $\alpha$ be a K\"ahler class on $X$. For any K\"ahler metric $\omega \in \alpha$, its \emph{Ricci curvature} is denoted $\Ric \omega = -\frac{i}{2\pi} i\ddbar \log \omega^n,$ while its \emph{scalar curvature} is denoted $S(\omega) = \Lambda_{\omega}\Ric\omega.$

Defining the operator $\D = \bar\partial \nabla^{1,0}$ on functions, for $\omega$ to be extremal means that $\D S(\omega)=0.$ This means that the section $ \nabla^{1,0} S(\omega)$ of the holomorphic tangent bundle $TX^{1,0}$ of $X$ is a holomorphic section, so is a holomorphic vector field. We further define the space of \emph{holomorphy potentials} on $X$ to be the functions $h$ such that $\D h =  0,$ and throughout we denote $$\overline \mfk = \{ h \in C^{\infty}(X) \ | \ \D h = 0\},$$  so that for $\omega$ to be extremal means that $S(\omega) \in \overline \mfk$. The vector fields taking the form $\D h$ for some $h$ are precisely the holomorphic vector fields on $X$ that vanish somewhere.

Denoting by $\Aut_0(X)$ the connected component of the identity in the biholomorphism group of $X$, we further denote by $\Aut_0(X,\alpha) \subset \Aut_0(X)$ the Lie subgroup associated with vector fields that vanish somewhere. This is sometimes called the reduced automorphism group of $X$, and in the case that $\alpha = c_1(L)$ for some ample line bundle $L$ on $X$, corresponds to automorphisms which lift to $L$. Note that the group itself is actually independent of $\alpha$.

\subsection{Extremal metrics on blowups} 
As the main novelty in our work is the geometric approach, we give a detailed summary of the approach before establishing the various steps involved.

We consider a fixed compact complex manifold $X$, a K\"ahler class $\alpha$, and assume that there is an extremal K\"ahler metric $\omega\in\alpha$. We fix a point $p\in X$ and consider the blowup $\sigma: \Bl_p X\to X$ of $X$ at $p$, with exceptional divisor $E$. We wish to characterise the existence of extremal K\"ahler metrics on the blowup $\Bl_p X$ in the K\"ahler classes $\alpha_{\epsilon} = \sigma^*\alpha - \epsilon^2 [E]$ for $0<\epsilon\ll 1$. The argument consists of three steps. The first step is purely analytic and solves a general gluing problem, and no stability hypotheses enter into this step. The second  explains, having solved the gluing problem, how relative K-stability characterises the existence of extremal metrics on $(\Bl_p X, \alpha_{\epsilon})$. This already completely solves the existence problem for extremal K\"ahler metrics in these K\"ahler classes,  through relative K-stability. The third step then geometrically interprets what relative K-stability means in terms of more traditional geometric invariant theory, namely through geometric information around the point $p$ itself.

\subsubsection{Step 1: the main gluing argument} We begin with the case that $(X,\alpha)$ admits a cscK metric. The first main point of our argument is not to consider merely the blowup of $X$ at $p$, but instead to consider the blowup of $X$ at all points at once. That is, we consider the diagonal $$\Delta = \{(p,p) : p\in X\}  \subset X\times X,$$ which is a complex submanifold. We then consider the blowup $$\X = \Bl_{\Delta} (X\times X) \to X\times X$$ and let $\pi: \X  \to X$ be the projection onto the first factor. This holomorphic submersion is the universal blowup of $X$ along points; the fibre of $\pi$ over a point $p\in X$ is $\Bl_p X$. 

Letting $\E$ denote the exceptional divisor of the blowup $\sigma: \X \to X\times X$ (extending our previous notation for $\sigma$), this blowup comes with a natural $\epsilon$-dependent family of relatively K\"ahler classes $\scA_{\epsilon} = \sigma^*\alpha - \epsilon^2 [\E]$ for all $0<\epsilon \ll 1$, with $\alpha$ pulled back from one factor of $X$. We will consider in detail the geometry of the family of K\"ahler manifolds $\pi: (\X, \scA_{\epsilon}) \to X$.

The purely analytic first step of our argument constructs a relatively K\"ahler representatives of $\scA_{\epsilon}$ whose scalar curvature lies in a fixed obstruction space. The function space, which we denote $\bar \mfk_{V,\epsilon}$, is defined as a space of fibrewise holomorphy potentials with respect to an initial sequence of relatively K\"ahler metrics. For a relatively K\"ahler metric $\omega_{\epsilon} \in \scA_{\epsilon}$, denote by  $\omega_{\epsilon,p}$ the restriction of $\omega_{\epsilon}$ to $\Bl_pX$ and denote further $S_V(\omega_{\epsilon})$ the vertical scalar curvature, namely the function whose restriction to a fibre $\Bl_p X$ is the scalar curvature of $\omega_{\epsilon,p}$.

\begin{theorem}\label{sec2:gluing}
There is a sequence of relatively K\"ahler metrics $\omega_{\epsilon} \in \scA_{\epsilon}$ with  $$S_V(\omega_{\epsilon}) \in \bar \mfk_{V,\epsilon}.$$ \end{theorem}

 The notation $\omega_{\epsilon}$ is justified by the fact that these relatively K\"ahler metrics converge as $\epsilon \to 0$ to the pullback of $\omega$ to $X$. We point out here that the function space $\mfk_{V,\epsilon}$ is defined in such a way that, in proving this result, we may ultimately apply a version of the implicit function theorem fibrewise, rather than globally on $\X$. 

This result can be thought of as performing a version of the Arezzo--Pacard--Singer theorem \cite{APS} in families, where the the main point is to perform the relevant analysis in such a way that the resulting relatively K\"ahler metric varies smoothly from fibre to fibre, and to prove this we adapt the arguments of Seyyedali--Sz\'ekelyhidi (whose approach is more ``coordinate-free'') \cite{seyyedali-szekelyhidi}. The key point is that the function space $\bar \mfk_{V,\epsilon}$ is different to that involved in prior work, and this discrepancy will allow us in the subsequent step to employ stronger geometric results from K\"ahler geometry.

\subsubsection{Step 2: moment map geometry} In the next step, we wish to understand the geometry of the holomorphic submersion $(\X,\omega_{\epsilon}) \to X$. As we are interested for the moment in the cscK setting, we assume that $(\Bl_pX,\alpha_{\epsilon})$ is K-stable for $0<\epsilon \ll 1$, and aim to construct cscK metrics in the class $\alpha_{\epsilon}$.

We use the relatively K\"ahler metric $\omega_{\epsilon}$ to define a Hermitian metric on the relative anticanonical class $-K_{\X/X}$, with curvature which we denote $\rho_{\epsilon} \in c_1(-K_{\X/X}).$ Denote by $$\hat S_{V,\epsilon} = \left(\frac{-n\int_{\Bl_pX}c_1(\Bl_pX)\cdot\alpha_{\epsilon}^{n-1}}{\int_{\Bl_pX}\alpha_{\epsilon}^{n}}\right)$$ the vertical average scalar curvature, where we note that this value is independent of $p\in X$. We then consider the forms $\Omega_{\epsilon}$ on $X$ defined as fibre integrals $$\Omega_{\epsilon} =\frac{\hat S_{V,\epsilon}}{n+1} \int_{\X/X} \omega_{\epsilon}^{n+1} - \int_{\X/X}\rho_{\epsilon}\wedge \omega_{\epsilon}^n,$$ which is a closed $(1,1)$-form that is (by definition) the Weil--Petersson form on the base of the submersion $\X\to X$.

The group $\Aut_0(X,\alpha)$ is reductive \cite[Proposition 4.18]{gabor-book}, and we let $K$ be a maximal compact subgroup. The diagonal $\Delta \subset X\times X$ is fixed by $K$, so that there is a $K$-action on $\X$, and by equivariance of $\pi: \X\to X$ this implies that $\Omega_{\epsilon}$ is $K$-invariant. We then define a function space $\bar\mfk_{\pi,\epsilon}$ and moment maps $$\mu_{\epsilon}: \X \to \mfk^*$$ with respect to $\omega_{\epsilon}$, where $\mfk = \Lie K$ and where we interpret the moment map condition through equivariant differential geometry (noting $\omega_{\epsilon}$ may not be positive), such that for all $v\in \mfk$ we have $\langle\mu_{\epsilon}, v\rangle \in \bar\mfk_{\pi,\epsilon}$.  The moment map $$\sigma_{\epsilon}: X\to \mfk^*$$ for the $K$-action on $(X,\Omega_{\epsilon})$ then takes the form  \cite{DH}  $$\langle\sigma_{\epsilon},v\rangle = \int_{\X/X}\langle\mu_{\epsilon}, v\rangle(S_V(\omega_{\epsilon})- \hat S_{V,\epsilon}) \omega_{\epsilon}^n,$$ which as we are integrating an $(n,n)$-form, produces a function on $X$.

For $p\in X$ to be a zero of of the moment map $\sigma_{\epsilon}$ thus means  that on $\Bl_pX$ the scalar curvature $S(\eta_{\epsilon,p})-\hat S_{V,\epsilon}$ is $L^2$-orthogonal to $\bar\mfk_{\pi,\epsilon}|_{\Bl_pX}.$ So if we knew that the scalar curvature satisfied  $$S_V(\omega_{\epsilon}) - \hat S_{V,\epsilon} \in \bar \mfk_{\pi,\epsilon},$$zeroes of the moment map would be cscK metrics. It is important to emphasise here that the function space $\bar\mfk_{\pi,\epsilon}$---which is defined globally on $\X$---does not actually agree with $\bar\mfk_{V,\epsilon}$, where our relatively K\"ahler metrics $\omega_{\epsilon}$ satisfy $S_V(\omega_{\epsilon}) \in \bar\mfk_{V,\epsilon}$. 

Importantly,  however, the two function spaces do agree to leading order in $\epsilon$. The first crucial point is that this equality-to-leading-order is sufficient to reduce finding cscK metrics to finding  zeroes of the moment maps $\sigma_{\epsilon}$. The second crucial point is that although we do not know that the Weil--Petersson form $\Omega_{\epsilon}$ on $X$ is actually positive, we reduce the question to some in principle different---but genuinely K\"ahler---form on $X$ with the same moment map; this uses a more refined understanding of the expansion of the scalar curvature in $\epsilon$ in Theorem \ref{sec2:gluing}. Thus the $\sigma_{\epsilon}$ are moment maps with respect to a sequence of K\"ahler metrics on $X$, and to solve the problem we must only solve a finite-dimensional moment map problem on $X$ itself.

The general theory surrounding the Kempf--Ness theorem then explains precisely when such moment map problems may be solved. The output of these results is that if no zero of the moment map exists in the given $\Aut_0(X,\alpha)$-orbit of $p\in X$, there is an element $v \in \mfk$ such that the flow of the vector field $Jv$ (with $J$ the almost complex structure on $X$) specialises $p$ to some fixed point $q \in X$ of $v$, and such that $$\langle\sigma_{\epsilon},v\rangle(q) \leq 0,$$ with equality only when $q \in \Aut_0(X,\alpha)\cdot p$. Furthermore, $v$ can be taken to be rational (hence generating a $\C^*$-action). By definition of the moment map $\sigma_{\epsilon}$, the quantity $\langle\sigma_{\epsilon},v\rangle(q)$  is precisely the classical Futaki invariant computed on the blowup $(\Bl_{q}X,\alpha_{\epsilon})$. The element  $v \in \mfk$ produces a test configuration involved in the theory of K-stability, and the condition of  K-stability of $(\Bl_pX,\alpha_{\epsilon})$ then precisely implies that no such element $v$ may exist. Thus through the geometric approach we have taken, we end up in a situation where K-stability can directly be seen as the obstruction to the existence of cscK metrics; more precisely, we obtain that K-stability of $(\Bl_pX,\alpha_{\epsilon})$ implies the existence of a cscK metric on the blowup of $X$ at $p$ in each $\alpha_{\epsilon}$ for $0<\epsilon \ll 1$:

\begin{theorem} There is an $\epsilon_0>0$ such that if $0<\epsilon<\epsilon_0$ and  $(\Bl_p X, \alpha_{\epsilon})$ is K-stable, then $\alpha_{\epsilon}$ admits a cscK metric.
\end{theorem}

The converse, that the existence of a cscK metric implies K-stability, follows from  existing general theory \cite{stoppa, stoppa-szekelyhidi, relative, kahler,zak}. Thus we obtain the version of the Yau--Tian--Donaldson conjecture in this setting, with the additional information that only test configurations induced by one-parameter subgroups of $ \Aut_0(X,\alpha)$ are needed to test K-stability (in particular with smooth central fibre given by the blowup of $X$ at the specialisation of $p$).

We next turn to the extremal case, where our assumption is that $(\Bl_pX,\alpha_{\epsilon})$ is relatively K-stable for $0<\epsilon \ll 1$ and we wish to construct extremal metrics. We thus fix $p \in X$ and a maximal compact torus $T\subset \Aut(X,\alpha)_p$ (with the latter denoting the stabiliser of $p\in X$), and consider the action of the centraliser $\Aut(X,\alpha)^{T}$ of $T$ in $\Aut(X,\alpha)$ on the fixed point locus $X^{T}$, which is a smooth compact complex submanifold of $X$ and which contains $p$. We consider the restriction of our previous procedure to $X^{T}$, namely blowing up the diagonal $\Delta^T\subset X^T\times X^T$, giving $\X^T\to X^T$ which is an $\Aut(X,\alpha)^{T}$-equivariant holomorphic submersion with a trivial $T^{\C}$-action on both $\X^T$ and $X^T$. 

Lying in the Lie algebra $\Lie T^{\C}$ are canonically defined $\epsilon$-dependent vector fields $\xi_{\epsilon}$, which are (by definition) the extremal vector field on $(\Bl_pX,\alpha_{\epsilon})$ and which are independent of point in the orbit  $\Aut_0(X,\alpha)^T.p$. In the cscK case already considered, the $\xi_{\epsilon}$ vanish.  We obtain a natural sequence of closed $(1,1)$-forms $$\Omega_{\epsilon} =\frac{\hat S_{V,\epsilon}}{n+1} \int_{\X/X}\langle\mu_{\epsilon},\xi_{\epsilon}\rangle \omega_{\epsilon}^{n+1} - \int_{\X/X}\rho_{\epsilon}\wedge \omega_{\epsilon}^n,$$ which by a similar trick to before we may assume are K\"ahler, along with moment maps  $\sigma_{\epsilon}: X \to (\mfk^{T})^*$ taking the form (by a result of Hallam) $$\langle\sigma_{\epsilon},v\rangle = \int_{\X/X}\langle\mu_{\epsilon}, v\rangle(S_V(\omega_{\epsilon})- \hat S_{V,\epsilon} - \langle \mu_{\epsilon},\xi_{\epsilon}\rangle) \omega_{\epsilon}^n.$$ The rest of the argument is similar to the cscK case, once this geometry has been setup: the relative K-stability condition forces the existence of a zero of these moment maps, and by the gluing step, the existence of extremal metrics much as before.

\begin{theorem}There is an $\epsilon_0>0$ such that if $0<\epsilon<\epsilon_0$ and  $(\Bl_p X, \alpha_{\epsilon})$ is relatively K-stable, then $\alpha_{\epsilon}$ admits an extremal metric.
\end{theorem}

\subsubsection{Step 3: K-stability and GIT} What remains is to give an explicit understanding of relative K-stability of $(\Bl_{p}X,\alpha_{\epsilon})$ in terms of the geometry of the point $p\in X$ itself. Results of this form go back to Stoppa, Stoppa--Sz\'ekelyhidi and Sz\'ekelyhidi \cite{stoppa,stoppa-szekelyhidi,gabor-blowups2}, and in essence show that the Futaki invariant $$\langle\sigma_{\epsilon},u\rangle(q)  = \int_{\Bl_{q} X} \mu_{\epsilon}(v)(S(\omega_{\epsilon,q}) - \hat S_{V,\epsilon}) \omega_{\epsilon,q}^n$$ has a complete asymptotic expansion in $\epsilon$ involving the geometric invariant theory (GIT) weight of the point $q$ with respect to $v$ and the classes $\alpha$ and $c_1(X)$. Our contribution in this step is to extend these results also to the inner products involved in the definition of relative K-stability, which then allows us to give an explicit equivalent criterion for relative K-stability in terms of a relative version of GIT stability.

Thus we show that relative K-stability, with respect to the test configurations induced by Step 2, is equivalent to a suitable relative version of GIT stability. The Kempf--Ness theorem applied again equivalently characterises relative GIT stability in terms of a sequence of finite-dimensional moment maps. So we see that both relative GIT stability---and equivalently the existence of zeroes of corresponding finite-dimensional moment maps---also characterise  relative K-stability and hence the existence of extremal metrics on $(\Bl_{p}X,\alpha_{\epsilon})$. This completes the proof in the case that $(X,\alpha)$ admits an extremal metric.

\subsection{The semistable case} We next turn to the semistable case, which again splits into a cscK version and an extremal version. To appeal to a result requiring a kind of compactness, we will assume $\alpha = c_1(L)$ for $L$ ample, so that $X$ is projective. As the main novelty is the same in both cscK and extremal settings, we explain only the cscK setting, where our assumption is that $(X,L)$ is analytically K-semistable. By definition, this means that $(X,L)$ is a small deformation of a cscK  manifold, which we write $(\Y_{0},\L_0)$. We consider the Kuranishi space $B$ of $(\Y_{0},\L_{0})$, which we assume is smooth (namely we assume that the deformation theory of $(\Y_{0},\L_0)$ is unobstructed).  By construction, $B$ admits an action of a maximal compact subgroup $K$ of $\Aut_0(\Y_{0},\L_0)$, and there is a universal family $\pi_{\Y}: (\Y,\L) \to B$ with a $K$-action making $\pi_{\Y}$ equivariant. We will be interested in a point $p \in X$ and will assume that a maximal torus in $\Aut_0(\Y_{0},\L_0)_p$ induces a maximal torus of $\Aut_0(X,\alpha)_p$, with $(X,\alpha)$ viewed as a fibre in the Kuranishi space (this holds automatically, for example, if $p$ has trivial stabiliser).

The fibre product $\Y\times_B \Y$ is a complex manifold which, as a set, consists of pairs $(y_1,y_2)$ with $y_1$ and $y_2$ lying in the same fibre over $B$, and which admits a holomorphic submersion over $\Y$ with fibre over $y\in \Y$ given by $\Y_{\pi(y)}$ (the fibre of $\pi_{\Y}$ over $\pi_{\Y}(y)$). We consider the diagonal $$\Delta = \{(y,y) \in \Y\times_B \Y \} \subset \Y\times_B \Y,$$ which is smooth (being biholomorphic to $\Y$) and its blowup $$\X = \Bl_{\Delta} (\Y\times_B \Y).$$ We thus obtain a holomorphic submersion $\sigma: \X \to \Y$ with fibre over $y\in \Y$ given by $\Bl_y \Y_{\pi(y)}$, along with classes $\L_{\epsilon} = \sigma^*\L - \epsilon^2[\E]$, which are relatively ample over $\Y$. 

We are thus in a similar situation to the cscK case. The main difference is that the base of the submersion $\Y$ is not compact, although the morphism $\Y \to B$ is proper. A similar procedure to the cscK case endows the class $\L_{\epsilon}$ with a sequence of relatively K\"ahler metrics $\omega_{\epsilon}$, such that on each fibre  $(\Bl_y\Y_{\sigma(y)},\L_{\epsilon})$ their scalar curvature lies in a function space $\bar\mfk^T_{V,\epsilon}|_{\Bl_y\Y_{\sigma(y)}}$, perhaps after shrinking $B$ (which is necessitated by its noncompactness). 

We then endow the base $\scY$ of the submersion with a sequence of K\"ahler metrics, by producing a sequence $\Omega_{\epsilon}$ of closed $(1,1)$-forms defined as fibre integrals over the submersion $\X \to \Y$ through the relatively K\"ahler metrics $\omega_{\epsilon}$ in the same way as before (involving also the induced metric on the relative antitcanonical class $-K_{\X/\scY}$), and using a similar trick as in the cscK case we may assume that the $\Omega_{\epsilon}$ are actually K\"ahler, perhaps after shrinking $B$. Since $(X,L)$ does not admit a cscK metric, the associated point $\pi(y)\in B$ is strictly semistable in the sense of GIT, and the main point is that the point $y\in \Y$ may nevertheless be stable with respect to the $\Omega_{\epsilon}$. So our main point is to understand the geometry of the  natural sequence of moment maps $\sigma_{\epsilon}$ for these K\"ahler metrics $\Omega_{\epsilon}$. We prove the following:

\begin{theorem} There is an $\epsilon_0>0$ such that if $0<\epsilon<\epsilon_0$ and  $(\Bl_p X, L_{\epsilon})$ is K-stable, then $c_1(L_{\epsilon})$ admits a cscK metric.\end{theorem}

General theory again provides a converse. The main difference with the cscK and extremal settings is that the space  $\Y$---which is the base of our holomorphic submersion $\X\to \Y$---is non-compact, meaning we cannot appeal to the global Kempf--Ness theorem as before to relate GIT stability to the geometry of the  moment map. We thus use a generalisation of the ``gradient flow'' approach introduced in \cite{DMS}, which is more suited to the noncompact setting. What is ultimately needed is that the flow exists for all time; for this, and also to understand the asymptotics of the flow, we must embed $\Y$ in a compact space, where some new arguments are required, involving the projectivity hypothesis on $(X,L)$.

\section{The main gluing argument}
\label{sec:analysis}
The goal of this section is to prove Theorem \ref{sec2:gluing}.

\subsection{Constructing an initial approximate solution}
Let $\X = \Bl_{\Delta}(X \times X)$, where $\Delta \subset X \times X$ is the diagonal. We will let $\sigma : \X \to X \times X$ be the blowdown map, $\tau : X \times X \to X$ be the projection to the first factor, and $\pi = \tau \circ \sigma.$ We therefore have the following diagram:
$$
\begin{tikzcd}
  \X \arrow[r, "\sigma"] \arrow[rd, "\pi"]
    & X \times X \arrow[d, "\tau"] \\
	& X 
\end{tikzcd}
$$

Let $\omega$ be a fixed metric on $X$.  We will take $\omega$ to be an extremal metric on $X$, but this is not important for the moment. This gives a relative K\"ahler metric, which we also denote $\omega$, on $X \times X \to X$ by pulling back from the projection to the \emph{second} factor. Next, let $d$ be the function on $X \times X$ whose restriction to a fibre of $\phi$ is the distance with respect to $\omega$ to the diagonal, within that fibre. In other words, $d$ is the function whose restriction to $\tau^{-1}(p)$ is the distance to $p$. Note that as $\Delta \subset X \times X$ is a smooth subvariety, and $d^2$ is smooth on every fibre of $\phi$, $d^2$ is smooth on $X \times X$.

We will fix a maximal torus $T$ of the stabiliser $\Aut_0(X,\alpha)_p$ of $p$. Since $\omega$ is in particular invariant under the action of $T$,  the distance function $d$ is invariant under the action of $T$.

The Burns--Simanca metric $\eta_{\BS}$ is a scalar-flat metric on $\Bl_0 \C^n$ \cite[Section 8.1.2]{gabor-book}. Let $\gamma$ be a cutoff function which is equal to $1$ on $[-\infty,1]$ and $0$ on $[2, \infty)$. We can identify the complement of the exceptional divisor in $\Bl_0 \C^n$ with $\C^n \setminus \{ 0 \}$ which we give coordinates $\zeta$. The Burns-Simanca metric can be written as
$$
\eta_{\BS} = \ddb \left( |\zeta|^2 + \gamma(|\zeta|) \log(|\zeta|^2)+ f(|\zeta|^2) \right),
$$
where $f : [0, \infty)$ is smooth up to the boundary of $[0, \infty)$, and such that $\nabla^i f(s) = O(s^{2-n-i})$ as $s \to \infty$, for all $i$. Now, let $r_{\epsilon} = \epsilon^{\frac{2n}{2n+1}}$, and let $\gamma_2 = \gamma(r_{\epsilon}^{-1} d)$ and $\gamma_1=1-\gamma_2$. Define
$$
\omega_{\epsilon} = \sigma^*\omega + \epsilon^2 \ddb\left(\gamma_2 \cdot (\gamma(\epsilon^{-1}d)\log (\epsilon^{-2}d^2) + f(\epsilon^{-2}d^2))\right).
$$ 
\begin{remark}
In \cite{seyyedali-szekelyhidi}, Seyyedali--Sz\'ekelyhidi study the extremal metric equation on the total space of blowing up an extremal K\"ahler manifold in a submanifold. They define a K\"ahler metric on the total space using a similar formula to the above. The main difference, in addition to starting with a K\"ahler metric instead of a relatively K\"ahler metric before blowing up, is that they use the distance function to the submanifold, whereas we use the fibrewise distance function. 

Note also that in the case of blowing up a point, the forms defined by Seyyedali--Sz\'ekelyhidi differ slightly from those defined by Sz\'ekelyhidi \cite{gabor-blowups1}. Sz\'ekelyhidi glues $\omega$ and $\epsilon^2 \eta_{\BS}$, where $\eta_{\BS}$ is the Burns--Simanca metric,  over an annular region, whereas Seyyedali--Sz\'ekelyhidi add $\epsilon^2 \eta_{\BS}$ to $\pi^* \omega$ near the exceptional divisor, and then cut off the Burns--Simanca metric over the annular region.
\end{remark}

\begin{lemma}
The closed two-form $\omega_{\epsilon}$ is relatively K\"ahler with respect to $\pi : \X \to X$, for all sufficiently small $\epsilon >0$.
\end{lemma}
\begin{proof}
The $\omega_{\epsilon}$ restrict to the forms defined by Seyyedali--Sz\'ekelyhidi \cite{seyyedali-szekelyhidi} on each fibre, which are K\"ahler for all sufficiently small $\epsilon$ (depending on the fibre). We musst show that we can make this uniform in the point $p$, which, by compactness of the base of $\X \to X$, boils down to showing that for every $p \in X$ we can find such a uniform estimate in a ball about $p$. 

Indeed, given a holomorphic normal coordinate system about $p$, one can choose $\epsilon_0>0$ depending on $\omega$ and the size of this coordinate system so that $\omega_{\epsilon}$ is K\"ahler for all $\epsilon \in (0, \epsilon_0)$, see \cite[Proposition 4 and Lemma 5]{seyyedali-szekelyhidi}. If the size of the holomorphic normal coordinate system is shrunk by a factor of $\lambda$, then the corresponding $\epsilon_0$ is also shrunk by a power of $\lambda$. Now, if we have such a coordinate system about $p$ which has size $r$, then we can ensure every point that is sufficiently close to $p$ has a holomorphic normal coordinate system about it of radius $\frac{r}{2}$. It then follows that 
there is a ball about $p$ such that for all $q$ in this ball, $\omega_{\epsilon}$ is K\"ahler for all $\epsilon \in (0, c \epsilon_0)$, where $c \in (0,1)$ is a constant. This is the required uniform local bound, giving the result.
\end{proof}

As the diagonal is fixed by the product action of $\Aut_0(X,\alpha)$ on $X\times X$, this action lifts to $\X$. In this way, holomorphic vector fields on $X$ induce holomorphic vector fields on $\X$. Our goal in this section is construct a relatively K\"ahler metric whose fibrewise scalar curvature is equal to the restriction of a sort-of holomorphy potential for such a vector field. We begin by describing how the holomorphy potentials for these vector fields lift to $\X$.

First note that if we used the form $\widetilde \omega_{\epsilon}$ which is produced by using the distance function $d_{\Delta}$ to the diagonal $\Delta$, computed with respect to the product metric of $\omega$ on both factors of $X\times X$, we would obtain a genuinely K\"ahler metric $\widetilde \omega_{\epsilon}$ on $\X$ for all sufficiently small $\epsilon$. On the region $B_{r_{\epsilon}}$ about the exceptional divisor, the metric in coordinates is a perturbation of  $\pi^* \omega + \epsilon^2 \eta$, where $\eta$ is the Burns-Simanca metric. It follows that if $\xi$ is a real holomorphic vector field on $X \times X$ with potential $h'$ (which is just the product of the potentials with respect to the factors), then the lift of $\xi$ (which we will also denote $\xi$) has potential $\widetilde h_{\epsilon}$ such that
$$
\widetilde h_{\epsilon} - h' = \gamma_2 \cdot O(1),
$$ 
since $\omega_{\epsilon} = \pi^* \omega$ on $B_{2r_{\epsilon}}$, $\widetilde h_{\epsilon} - h'$ is supported in $B_{2r_{\epsilon}}$. Here, by $\gamma_2 \cdot O(1)$ we mean that $|\widetilde h_{\epsilon} - h' |$ is bounded by a constant times $\gamma_2$.

Now, we can write
$$
\omega_{\epsilon} = \widetilde \omega_{\epsilon} - (\tau_2 \circ \pi)^* \omega + \ddb \phi_{\epsilon},
$$
where $\tau_2 : X \times X \to X$ is the projection to the second factor and $\phi_{\epsilon}$ is the difference of potentials, given by
\begin{align*}
\phi_{\epsilon} =& \epsilon^2 \left(\gamma_2 \cdot (\gamma(\epsilon^{-1}d)\log (\epsilon^{-2}d^2) + f(\epsilon^{-2}d^2))\right) \\
-& \epsilon^2\left(\gamma_2 \cdot (\gamma(\epsilon^{-1}d_{\Delta} )\log (\epsilon^{-2}d_{\Delta}^2) + f(\epsilon^{-2}d_{\Delta}^2))\right) .
\end{align*}
If $h$ denotes the holomorphy potential of some  $u\in \mfk^T$ on  $X$, then the holomorphy potential with respect to $(\tau_2 \circ \sigma)^* \omega$ is the pullback of $h$. 
It follows that 
\begin{align}
\label{eq:pots}
h_{\epsilon} = \widetilde h_{\epsilon} - (\tau_2 \circ \sigma)^*h + \frac{1}{2} u (f_{\epsilon})
\end{align}
is a holomorphy potential with respect to $\omega_{\epsilon}$. Now, $u (f_{\epsilon})$ is $O(1)$ in $\epsilon$ as we are taking a derivative in the above. However, the support of $f_{\epsilon}$ is also contained in the support of $\gamma_2$ as this is a factor in $f_{\epsilon}$, which is contained in $B_{2r_{\epsilon}}$. Thus
$$
h_{\epsilon} = h + \gamma_2 \cdot O(1),
$$ 
where $h$ is identified with its pullback from the first factor. 

\subsection{Weighted H\"older spaces}
\label{sec:wtdspaces}

We now define the weighted H\"older spaces relevant to our problem, following \cite{arezzo-pacard1,gabor-blowups1}.

\subsubsection{Weighted spaces on punctured manifolds}

We now define weighted spaces on the fibres. Fix holomorphic coordinates $z_1, \ldots, z_n$ around $p$ in which $T$ acts linearly and which are normal with respect to the K\"ahler metric induced by $\omega$, and assume that they exist at least in the disk $D_2$ of radius $2$ about $p$; this can be assumed after scaling $\omega$ if necessary. 

Given a function $f : M \setminus \{ p \} \to \R$, define $f^{\delta}_r : D_2 \setminus D_1 \to \mathbb{R}$ for  for $r>0$ by 
$$
f^{\delta}_r (z) = r^{-\delta} f(rz).
$$

\begin{definition}
\label{def:wtdnormpuncture}
The $C^{k,\alpha}_{\delta}$-\emph{norm} on $M \setminus \{ p \}$ is defined to be 
$$
\| f \|_{C^{k, \alpha}_{\delta} (M \setminus \{ p \} )} = \textnormal{sup}_{r \in (0,1)} \| f_r^{\delta} \|_{C^{k,\alpha}} ( D_2 \setminus D_1) + \| f \|_{M \setminus D_1 (p) }.
$$
\end{definition}

\subsubsection{Weighted spaces on the blowup}

We have coordinates $z$ about $p$ which we will identify with coordinates $\zeta = \epsilon^{-1} z$ about the exceptional divisor in $\Bl_0 \C^n$. We identify the annulus $D_1 \setminus D_{\epsilon}$ around $p$ in $X$ with the (preimage via the blowdown map of the) annulus $D_{\epsilon^{-1}} \setminus D_1$ in $\Bl_0 \mathbb{C}^n$. In other words, 
$$
\Bl_p X = X \setminus D_{\epsilon}(p)   \bigcup_{D_1 \setminus D_{\epsilon} \sim \pi^{-1}( D_{\epsilon^{-1}} \setminus D_1 )} \pi^{-1} (D_{\epsilon^{-1}}) .
$$
We can then also define a weighted norm on the blowup. Given a function $f : \Bl_p X \to \mathbb{R}$, we can define a function $f^{\delta}_{\varepsilon} : \Bl_0 D_1 \to \mathbb{R}$ by
$$
f^{\delta}_{\varepsilon} (\zeta) = \varepsilon^{-\delta} f (\varepsilon \zeta).
$$
Up to a rescaling depending on $\varepsilon$, this is the restriction of $f$ to the preimage via the blowdown map of the ball of radius $\varepsilon$ about $p$, pulled back to the preimage of a ball of fixed size.

\begin{definition}
\label{def:wtdnorm}
The $C^{k,\alpha}_{\delta}$-\emph{norm} on $\Bl_p X$ is defined to be 
$$
\| f \|_{C^{k, \alpha}_{\delta}(\Bl_p X) } = \| f_{\epsilon}^{\delta} \|_{C^{k,\alpha} ( \Bl_0 D_1 )} +  \textnormal{sup}_{r \in (\varepsilon,1)} \| f_r^{\delta} \|_{C^{k,\alpha}} (D_2 \setminus D_1) + \| f \|_{X \setminus D_1 (p) }.$$
\end{definition}

\subsection{The linearisation}
The next step is to understand the linearisation of our problem. The Lichnerowicz operator, which approximates the linearisation of scalar curvature operator, has (co)-kernel which depends on the blown up point. We assume that the maximal torus $T\subset \Aut_0(X,\alpha)_p$ of $\Aut_0(X,\alpha)_p$ contains the extremal vector field, so that $S(\omega) \in \mft = \Lie T\subset \mfk^T$, and note  that $X^T$ is a closed submanifold of $X$. 

We begin by recalling the linearisation result on the punctured manifold $X_p = X \setminus \{ p \}$. Let $T$ be a torus of isometries of $(X,\omega)$ and let $(C^{k, \alpha}_{\delta})^T$ denote the space of $T$-invariant functions, while $\bar \mfk$ denotes the space of $T$-invariant Hamiltonian functions of holomorphic killing fields.
\begin{proposition}[{\cite[Proposition 17]{gabor-blowups1}}]
\label{prop:Lichpunctured}
Let $\delta <0.$ Then, for all $p \in X$, the operator 
$$
P : (C_{\delta}^{4,\alpha})^T(X_p) \times \bar \mfk^T \to (C_{\delta-4}^{0,\alpha})^T (X_p)
$$
given by
$$
(f,\nu,c) \mapsto \mathcal{D}_{\omega}^* \mathcal{D}_{\omega} (f)  - h
$$
admits a right-inverse $Q_{p}$ with operator norm bounded independently of $\epsilon$. Moreover, $Q_{p}$ depends smoothly on $p$.
\end{proposition}
The only part that is new compared to \cite{gabor-blowups1} is the smooth dependence on $p$, which follows by taking the $C^{4,\alpha}$-component of $Q_{\epsilon,p}$ to be orthogonal to $\bar \mfk^T$.

A similar statement holds for the Lichnerowicz operator on $\Bl_0 \C^n$ associated to the Burns--Simanca metric $\eta$.
\begin{proposition}[{\cite[Proposition 18]{gabor-blowups1}}]
\label{prop:LichBS}
Suppose $n>2$ and let $\delta > 4-2n.$ Then, the operator 
$$
P : C_{\delta}^{4,\alpha}(\Bl_0 \C^n) \to C_{\delta-4}^{0,\alpha} (\Bl_0 \C^n)
$$
given by
$$
f\mapsto \mathcal{D}_{\eta}^* \mathcal{D}_{\eta} (f)  
$$
admits a right-inverse $Q$ with bounded operator norm. 

In the case when $n=2$, the same holds for $\delta \in (-1,0)$ for the operator
$$
P : C_{\delta}^{4,\alpha}(\Bl_0 \C^2) \times \R \to C_{\delta-4}^{0,\alpha} (\Bl_0 \C^2)
$$
given by
$$
(f,c) \mapsto \mathcal{D}_{\eta}^* \mathcal{D}_{\eta} (f) - c \chi,
$$
where $\chi$ is some fixed compactly supported function on $\Bl_0 \C^2$ with non-zero integral.
\end{proposition}

With this in place, we can prove the global result we need on the fibre $\Bl_pX$ of $\X$ over $p$. Let $L_{\omega_{\epsilon,p}}$ denote the linearisation of the scalar curvature operator at $\omega_{\epsilon,p}$. This can then be written 
$$
L_{\omega_{\epsilon,p}} (f) = - \mathcal{D}_{\epsilon,p}^* \mathcal{D}_{\epsilon,p} (f) + \frac{1}{2} \langle \nabla S(\omega_{\epsilon}), \nabla f \rangle.
$$ 
This is an operator on the fibre, so in the above we mean the pairing and gradient with respect to the metric induced by $\omega_{\epsilon}$ on the fibre.

We will et $\xi$ denote the lift of the extremal vector field (induced by $S(\omega) \in \bar \mfk$) to $\X$.  For $p \in X^T$ a fixed point of the torus $T$, the action of $T$ lifts to $\Bl_p X$, and is given by the restriction of the $T$-action on $\X$ to the fibre $\X_p$, which is fixed by $T$. We will let $C^{4,\alpha}_{\delta}(\Bl_pX)^T$ denote the subspace of $C^{4,\alpha}_{\delta}(\Bl_pX)$ of $T$-invariant functions. Since $\mfk^T$ is the Lie algebra of the commutator of $T$, all functions $h$ in $\bar \mfk^T$ and their lifts $h_{\epsilon}$ to $\X$ restricted to $\Bl_pX$ are $T$-invariant.
\begin{proposition}
\label{prop:linearisationinverse}
Let $n>2$ and let $\delta \in (4-2n, 0).$ Then, for all $p \in X^T$, the operator 
$$
P : C^{4,\alpha}_{\delta}(\Bl_pX)^T \times \bar \mfk^T \to C_{\delta-4}^{0,\alpha} (\Bl_pX)^T
$$
given by
$$
(f,h) \mapsto L_{\omega_{\epsilon}}(f) - \frac{1}{2} \langle \xi, \nabla f \rangle - h_{\epsilon} 
$$
admits a right-inverse $Q$ with operator norm bounded independently of $\epsilon$. Moreover, $Q$ depends smoothly on $p$.

In the case when $n=2$, the same holds for $\delta \in (-1,0)$ for all $|\delta|$ sufficiently small, but with $\| Q \| \leq C \epsilon^{\delta}$.
\end{proposition}
\begin{proof}
We explain how to adapt the proof of \cite[Proposition 22]{gabor-blowups1}. The proof first builds an approximate inverse to $P$, which is then perturbed to a genuine inverse. This approximate inverse is built from the inverses $Q_1$ of Proposition \ref{prop:Lichpunctured} to the Lichnerowicz operator of the extremal metric on the punctured manifold $X \setminus \{ p \}$ and $Q_2$  of Proposition \ref{prop:LichBS} to the Lichnerowicz operator of the Burns-Simanca metric on $\Bl_0 \C^n$, respectively.  As the domain of $P$ is $C^{4,\alpha}_{\delta}(\Bl_pX)^T \times \bar \mfk^T$, the approximate inverse maps to this product space and we describe the two components of the approximate inverse in turn. 

The $C^{4,\alpha}_{\delta}$-component $\widetilde{Q}$ of the approximate inverse of $P$ at $\phi \in C^{0,\alpha}_{\delta}$ can be written
$$
\widetilde{Q} (\phi) = \beta_1 Q_1 (\gamma_1 \phi) + \beta_2 Q_2(\gamma_2 \phi),
$$
where $\gamma_1 \phi$ is thought of as a function on $X \setminus \{ p \}$, $\beta_1$ is a cut-off function depending on $\epsilon$ which vanishes near $p$ and is equal to $1$ on the support of $\gamma_1$, and similarly for $\gamma_2 \phi$ on $\Bl_0 \C^n$. We refer to \cite{gabor-blowups1} for the precise definition of the $\beta_i$, but note that they do not add  to $1$ everywhere. 

The $\bar \mfk^T$-component of the approximate inverse is given by the $h_{\epsilon}$ (depending on $\phi$) associated to the $\bar \mfk^T$-component of $Q_1(\gamma_1 \phi)$ on $X \setminus \{ p \}$. That is, it is given by the lift $h_{\epsilon}$ of the function $h \in \bar \mfk^T$ that solves 
\begin{align}
\label{eq:Q1inverse}
\mathcal{D}^*_{\omega} \mathcal{D}_{\omega} Q_1(\gamma_1 \phi) - h = \gamma_1 \phi
\end{align}
on $X_p$. Since the inverse on $X_p$ is bounded, this satisfies that
\begin{align}
\label{eq:Q1bound}
\|Q_1(\gamma_1 \phi) \|_{C^{4,\alpha}_{\delta}(X_p)} + |h| \leq c \| \phi \|_{C^{0,\alpha}_{\delta-4}}
\end{align}

Now, to show that $\phi \mapsto (\widetilde{Q}(\phi), h_{\epsilon})$ is an approximate inverse to $P$, we need to show that the Claim in the proof of \cite[Proposition 22]{gabor-blowups1} holds, namely that
$$
\left\| L_{\omega_{\epsilon}}(\widetilde{Q} (\phi)) - \frac{1}{2} \xi (\widetilde{Q} (\phi) ) - h_{\epsilon} - \phi \right\|_{C^{0,\alpha}_{\delta-4}} \leq \frac{1}{2} \| \phi \|_{C^{0,\alpha}_{\delta-4}}
$$
for all sufficiently small $\epsilon$. The key difference between our setup and that of Sz\'ekelyhidi is that our lift $h_{\epsilon}$ of the Hamiltonian does not in general agree with his lift -- it only does so for holomorphy potentials for vector fields that lift to the blowup. The crucial difference on the part of the expression identified with a function on $X_p$ is therefore in the term
$$
\beta_1 h - \gamma_1 h_{\epsilon},
$$
which arises after manipulating the above terms and using Equation \eqref{eq:Q1inverse}. The bound for this follows in our case for the same reason as in \cite{gabor-blowups1}.  Indeed, since $h_{\epsilon} = h$, the holomorphy potential on $B_{\epsilon}$, outside $B_{2r_{\epsilon}}$, the term $\beta_1 h - \gamma_1 h_{\epsilon}$ on $X_p$ is supported in $B_{2r_{\epsilon}} \setminus \{ p \}$. Moreover, 
$$
\| \beta_1 h - \gamma_1 h_{\epsilon}\|_{C^{0, \alpha}_{0}} \leq c | h |.
$$
By the scaling property of the weighted norm on $X_p$ for functions supported in $B_{2r_{\epsilon}}$ and Equation \eqref{eq:Q1bound}, we therefore obtain that 
\begin{align*}
\| \beta_1 h_{\epsilon} - \gamma_1 h_{\epsilon}\|_{C^{0, \alpha}_{\delta - 4}} \leq c r_{\epsilon}^{4-\delta} \| \phi \|_{C^{0,\alpha}_{\delta-4}}.
\end{align*}

We also need to account for the change in the terms identified with a function on $\Bl_0 \C^n$, but this follows in a similar way. Here there is no term like $\beta_2 h_{\epsilon}$, since the Lichnerowicz operator of Proposition \ref{prop:LichBS} is invertible, not just invertible modulo $\mfk^T$. Thus one needs to bound $\gamma_2 h_{\epsilon}$. But this follows from the scaling property again, since the function is supported in the preimage of $B_{2r_{\epsilon}}$ via the blowdown map.

Note that by taking $Q_1$ to be the right inverse defined by taking the $C^{4,\alpha}_{\delta}$-component to be the unique one which is orthogonal to $\bar \mfk^T$, the operator $Q_1$ will depend smoothly on $p$. It follows that the right inverse $Q$ will do so as well.
The adjustments needed for the case $n=2$ follows from the mapping properties of the Lichnerowicz operator on $\Bl_0 \C^2$. They are therefore not impacted by the change in lift of the holomorphy potentials and follow exactly as in \cite{gabor-blowups1}.
\end{proof}

\subsection{Improving the approximate solution}
\label{sec:improving}
Next, we need to improve the approximate solutions $\omega_{\epsilon}$. We begin by changing $\omega$ on $X \times X \setminus \Delta$ similarly to \cite{seyyedali-szekelyhidi}. Let $\mathcal{D}^* \mathcal{D}$ be the Lichnerowicz operator associated to the product metric of $\omega$ on each factor on $X \times X$. Suppose first that $n>2$. If we let $\Gamma' = - d^{4-2n}$, then 
$$
\mathcal{D}^* \mathcal{D} (\Gamma') = - c \delta_{\Delta} + O(d^{5-2n}),
$$
for some constant $c$, where $\delta_{\Delta}$ is the current of integration along the diagonal $\Delta$. The same then holds fibrewise with respect to the relatively K\"ahler metric $\omega$ pulled back from the second factor, i.e. for every fibre $\phi^{-1}(p) \cong X$, we have that 
$$
\mathcal{D}_{\omega}^* \mathcal{D}_{\omega} (\Gamma') = - c \delta_{p} + O(d^{5-2n}).
$$
The constant is dimensional and does not depend on $p$. In the case $n=2$, one has a similar expansion when using $\log d$ instead.

From the mapping properties of Proposition \ref{prop:Lichpunctured} for the Lichnerowicz operator on the punctured manifolds $X_p$, it follows that there for every $p$, there is a 
$$
\Gamma_p = - d^{4-2n} + \widetilde \Gamma_p,
$$
where $\widetilde{\Gamma}_p$ is $O(d^{5-2n})$, and a $h_p \in \bar \mfk^T$ such that
$$
\mathcal{D}^* \mathcal{D} \Gamma_p = h_p - c \delta_{p}.
$$
Note that 
$$
\int_X h_p f \omega^n = c f(p)
$$
for every $f \in \bar\mfk^T$. Now, since $f$ is the hamiltonian for the vector field in $\mfk^T$ induced by the action of $K$ on $X$, we have that
$$
df (p) = d(\langle \mu, f \rangle) (p),
$$
where $\mu : X \to \mfk^*$ is the moment map. If we identify this with an element of $\bar \mfk^T$, then this is the element satisfying
$$
\langle \mu(p), f \rangle = \int_X \mu(p) f \omega^n
$$
for every $f$. Also, by the above
\begin{align*}
c df (p) = d\left(\int_X h_p f \omega^n\right),
\end{align*}
which means that $h_p$ equals $c$ times the moment map for the $K$-action, up to a constant. Integrating against the constant functions gives that 
$$
h_p = c \mu (p) + \frac{c}{\Vol(X)},
$$
where we have identified $\mu(p) \in \mfk^*$ with a function on $X$ via the $L^2$-pairing as above.

We now modify the relatively K\"ahler metric $\omega$ by the function $\Gamma$ whose restriction to $\phi^{-1}(p)$ is $\Gamma_p$. Note that this function is smooth on $X \times X \setminus \Delta$ since $d$ is and since by Proposition \ref{prop:Lichpunctured} the right-inverse of the Lichnerowicz operator on the fibres of $X \times X \to X$ depends smoothly on $p$ (so  $\widetilde \Gamma_p$ depends smoothly on $p$). We let
$$
\widetilde \omega_{\epsilon} = \omega_{\epsilon} + \ddb\left(\epsilon^{2n-2} \gamma_1  \Gamma \right).
$$

Letting $\omega_{\epsilon,p}$ denote the restriction of $\omega_{\epsilon}$ to $\Bl_pX$ and similarly for $\widetilde \omega_{\epsilon}$, we then have that 
\begin{align*}
S(\widetilde \omega_{\epsilon, p}) =& S(\omega_{\epsilon, p}) + L_{\omega_{\epsilon, p}} (\epsilon^{2n-2} \gamma_1  \Gamma) + R_{\omega_{\epsilon,p}}(\epsilon^{2n-2} \gamma_1  \Gamma)  \\
=&S(\omega_{\epsilon, p}) + L_{\omega_{\epsilon, p}} (\epsilon^{2n-2} \gamma_1  \Gamma) + O(\epsilon^{2n-1}),
\end{align*}
where $R_{\omega_{\epsilon,p}}$ is the non-linear part of the scalar curvature operator.

We begin by showing that this is an improved approximate solution. Let $h'_{\epsilon,p}$ denote the potential with respect to $\omega_{\epsilon}$ associated to $S(\omega) + \epsilon^{2n-2} h_p \in \bar \mfk$ and let $\xi'_{\epsilon,p}$ denote the corresponding real holomorphic vector field. 
\begin{lemma}[{\cite[Lemma 24]{gabor-blowups1}}]
\label{lem:approxsoln}
Let $\delta \in (4-2n, 0)$ be sufficiently close to $4-2n$ in the case $n>2$ and let $\delta < 0$ be sufficiently close to $0$ in the case $n=2$. Then, for all $0 < \epsilon \ll 1$ and for all $p \in X$,
\begin{align*}
\| S(\widetilde \omega_{\epsilon, p}) - \frac{1}{2} \xi'_{\epsilon,p}(\epsilon^{2n-2} \gamma_1  \Gamma_p) - h'_{\epsilon, p} \|_{C^{0,\alpha}_{\delta-4}} \leq C r_{\epsilon}^{4-\delta}.
\end{align*}
\end{lemma}
The proof is largely as in \cite{gabor-blowups1}. The main issue is to bound $S(\widetilde \omega_{\epsilon, p})$ on the annular region $B_{2r_{\epsilon}} \setminus B_{r_{\epsilon}}.$ The Burns--Simanca metric admits an expansion 
$$
\eta = \ddb \left( \frac{d^2}{2} - d^{4-2n} + \tilde \psi \right),
$$ 
where $\psi$ is $O(d^{3-2n}).$ 
Since $\omega$ now has this expansion as well, this means that the subleading order term in the expansion of the potential of $\widetilde \omega_{\epsilon,p}$ is $\epsilon^{2n-2} d^{4-2n}$ -- without any cutoff function. The crucial point is that $d^{4-2n}$ agrees with $|z|^{4-2n}$ to leading order in coordinates which, with respect to the flat metric, is in the kernel of the Laplacian squared, the linearised operator at the flat metric. Thus this term has the $C^{0,\alpha}_{\delta-4}$-norm of an $O(d^{3-2n})$-function, which has the required upper bound of the form $C r_{\epsilon}^{4-\delta}$ on the annular region.

\subsection{Solving the non-linear equation fibrewise}
We are now ready to prove Theorem \ref{sec2:gluing}, which we will state more  precisely as Theorem \ref{thm:fibrewisesoln} below. We first define the space $\bar \mfk_{V,\epsilon}$. Let $h_{u, \epsilon}$ denotes the holomorphy potential of the lift $u$ of a holomorphic vector field from $X \times X$ to $\X$ with respect to $\omega_{\epsilon}$. Let $h_{u, \epsilon,p}$ denote the restriction of such a function $h_{\epsilon}$ to a fibre $\X_p=\Bl_pX$. For a function $\phi$ on $\X_p$ such that $\omega_{\epsilon,p}+\ddb \phi$ is K\"ahler (for $\omega_{\epsilon,p}$ the restriction of $\omega_{\epsilon}$ to $\X_p$), let 
\begin{align}
\label{eq:fibrewiseeqn}
h_{u, \epsilon, p, \phi} = h_{u,\epsilon,p} + \frac{1}{2} \langle u , \nabla \phi \rangle,
\end{align}
where $\nabla$ is the gradient with respect to the metric induced by $\omega_{\epsilon,p}+\ddb \phi$. We then define 
$$
\bar \mfk^T_{p,\epsilon, \phi} = \{ h_{u, \epsilon, p, \phi} : u \in \mfk^T \}.
$$
Given a function $\phi_{\epsilon}$ on $\X$ such that $\omega_{\epsilon} + \ddb \phi_{\epsilon}$ is relatively K\"ahler we may then define
$$
\bar \mfk^T_{V,\epsilon} = \{ h \in C^{\infty}(\X) : h_p \in \bar \mfk^T_{\epsilon, p, \phi_{\epsilon,p}} \textnormal{ for all } p \in X \},
$$ where $\phi_{\epsilon,p}$ and $h_p$ the respective restrictions to $\X_p$. Solving $S_V(\omega_{\epsilon} + \ddb \phi_{\epsilon}) \in \bar \mfk^T_{V, \epsilon}$ is then equivalent to solving 
$$
S(\omega_{\epsilon,p} + \ddb \phi_{\epsilon,p}) = h_{u_{\epsilon,p}, \epsilon, p, \phi_{\epsilon,p}}
$$ 
on $\X_p$ for all $p \in X$ in a smoothly varying way: we then define $\phi_{\epsilon}$ as the function which restricts to $\phi_{\epsilon,p}$ for every fibre $\X_p$. Note that the vector field $u_{\epsilon,p}$ is allowed to depend on both $\epsilon$ and $p\in \X^T$.

Note that in the case that $u$ fixes $p$ so that $u$ lifts to $\Bl_p X$, Equation \eqref{eq:fibrewiseeqn} gives an explicit formula for the holomorphy potential of $u$ on $\Bl_pX$ with respect to $\omega_{\epsilon,p}+\ddb \phi$. In particular, for all elements of $\mft$, the two lifts agree for all points $p \in X^T$. However, in the case when the vector field does not lift to the fibre over $p$, the above formula for the potential does not agree with the genuine holomorphy potential with respect to the relatively K\"ahler metric $\omega_{\epsilon} + \ddb \phi_{\epsilon}$ we obtain after gluing the fibrewise solutions together. As $\mfk^T$ may be larger than $\mft$ (and is larger in situations of interest), there may be elements of $\mfk^T$ for which this is the case. We will see in subsequent sections that this approximates the restriction of a genuine holomorphy potential on $\X$ to the fibre to a sufficiently high accuracy for our purposes. 
\begin{theorem}
\label{thm:fibrewisesoln}
For all $0 < \epsilon \ll 1$ and for all $p \in X^T$, there exists $\phi_{\epsilon,p} \in C^{\infty}(\Bl_p X)^T$ and $h_{\epsilon, p} \in \bar \mfk_{p,T,\epsilon, \phi_{\epsilon,p}}$ corresponding to a vector field $u_{\epsilon,p} \in \mfk^T$ such that
$$
S(\omega_{\epsilon,p}+\ddb \phi_{\epsilon,p}) - \frac{1}{2} \langle u_{\epsilon,p}, \nabla(\phi_{\epsilon,p}) \rangle - h_{\epsilon,p} = 0.
$$
For $n>2$, the potential $ \frac{1}{2} \langle u_{\epsilon,p}, \nabla(\phi_{\epsilon,p}) \rangle + h_{\epsilon,p}$ admits an expansion
$$
 \frac{1}{2} \langle u_{\epsilon,p}, \nabla(\phi_{\epsilon,p}) \rangle+ h_{\epsilon,p}= S(\omega) + \epsilon^{2n-2} \left(c \mu(p) + \frac{c}{\Vol(X)} \right) + O(\epsilon^{2n-1}) + \gamma_2 O(\epsilon^2),
$$
where the $O(\epsilon^{2n-1})$-term is over the whole of $\Bl_pX$ and the term $\gamma_2 O(\epsilon^2)$ is an $O(\epsilon^2)$-function supported on $B_{2r_{\epsilon}}$. In the case $n=2$, the same holds except that the $O(\epsilon^{2n-1})$-term is $O(\epsilon^{2n-2+\theta})$ for some $\theta >0$.

Finally, the function $\phi_{\epsilon}$ on $\X^T$ whose restriction to the fibre $\sigma^{-1}(p) \cong \Bl_p X$ is $\phi_{\epsilon, p}$ is smooth.
\end{theorem}
\begin{proof}
The fact that we can solve the equation follows from the contraction mapping theorem as in \cite{gabor-blowups1}. This uses that $Q_{\epsilon,p}$ is a right inverse with bound independently of $\epsilon$ (in the case $n>2$), which follows from Proposition \ref{prop:linearisationinverse} since the approximate solution is a small perturbation of our initial approximate solution, and that by Lemma \ref{lem:approxsoln}, $(0, S(\omega)+\epsilon^{2n-2} h_p) \in \mfk$ is approximately solving the equation. Similarly, for the case $n=2$,  $Q_{\epsilon,p}$ is a right inverse with bound which $O( \epsilon^{\delta})$. Using this, one can show that the relevant operator is a contraction on the set
\begin{align}
\label{eq:contractionset}
\left\{ (f,h) : \|f\|_{C^{4,\alpha}_{\delta}}+|h| \leq c r_{\epsilon}^{4-\delta} \right\}
\end{align}
for a suitably chosen constant $c$ in the case when $n>2$ and on
\begin{align*}
\left\{ (f,h) : \|f\|_{C^{4,\alpha}_{\delta}}+|h| \leq c r_{\epsilon}^{4-\delta} \epsilon^{-\delta} \right\}
\end{align*} 
in the case when $n=2$. The smoothness of the solutions follows from the fact that this right inverse and approximate solutions depend smoothly on $p$. 

The remaining point is to prove that the potentials $\frac{1}{2} \langle u_{\epsilon,p}, \nabla(\phi_{\epsilon,p}) \rangle + h_{\epsilon,p}$ have the required expansion. This follows for the approximate solution $\omega_{\epsilon,p}$ because the $O(1)$-part in $\epsilon$ of the expansion holds  for the initial potential given by Equation \eqref{eq:pots}, and the change we made to the approximate solution $\omega_{\epsilon,p}$ introduced the $\epsilon^{2n-2}$-term. As the solution to the non-linear equation above is found on the set given by Equation \eqref{eq:contractionset}, it follows that this expansion of the holomorphy potentials is preserved upon perturbing from the approximate solution to the actual solution.
\end{proof}

\section{The existence of extremal metrics on blowups}\label{sec4}

\subsection{Moment map geometry}\label{moment-map-geometry} As before, consider the holomorphic submersion $\pi: \X \to X$, with $K\subset \Aut_0(X,\alpha)$ a maximal compact subgroup acting on $\X$ and $X$ in an equivariant manner. We fix a point $p \in X$ and denote by $T$ a maximal subtorus of $K$ fixing $p$. We let $\mft$ and $\mfk$ be the Lie algebras of $T$ and $K$ respectively. As throughout, we consider the fixed point locus $X^T$ of $T$ and the induced submersion $\X^T \to X^T$, where $\X^T$ is the preimage of $X^T$ in $\X$, and let $K^T \subset K$ be the commutator of $T$ in $K$. Our assumption is that there exists an extremal metric in the class $\alpha$, giving an associated extremal vector field. The cscK case is a special case of the extremal case, so we address only the cscK case. We further assume that the extremal vector field $\nabla^{1,0} S(\omega)$ on $X$ actually vanishes at $p$, hence lies in $\mft$, as otherwise it follows that $(\Bl_pX,\alpha_{\epsilon})$ cannot admit an extremal metric for $0<\epsilon\ll 1$ \cite[Proposition 40]{gabor-blowups2}.

We now relabel $\omega_{\epsilon} \in \scA_{\epsilon}$ as the relatively K\"ahler metrics constructed by Theorem \ref{thm:fibrewisesoln}, which are then $K_T$-invariant on $\X^T$.  This relatively K\"ahler metric induces a Hermitian metric on the relative anticanonical class $-K_{\X^T/X^T}$, with curvature which we denote $\rho_{\epsilon}$.

 Let 
 $$\mu_{\epsilon} : \X\to (\mfk^T)^*$$ be a moment map for the $K^T$-action on $\X^T$. While $\omega_{\epsilon}$ may only be relatively K\"ahler, what we mean here is that the usual moment map condition holds, or equivalently that $\omega_{\epsilon}+\mu_{\epsilon}$ is equivariantly closed with respect to the  $K^T$-action. We normalise the moment maps $\mu_{\epsilon}$ such that for each element $u\in \mfk^T$  the function induced by pairing with the moment map has integral zero over each fibre, using the fibrewise volume form induced by $\omega_{\epsilon}$.

We next involve the extremal vector field on $(\Bl_pX,\alpha_{\epsilon})$, which is characterised---without knowing whether this manifold admits an extremal metric---as follows. Note that, since $T$ is a maximal torus in $\Aut_0(X,\alpha)_p$, it induces a maximal torus in $\Aut_0(\Bl_pX,\alpha_{\epsilon})$. For commuting elements $u,u'\in \mfk^T$, we define the ($\epsilon$-dependent) \emph{Futaki invariant} on $(\Bl_pX,\alpha_{\epsilon})$ to be $$\Fut_{\epsilon}(u) = \int_{\Bl_pX} \langle \mu_{\epsilon}, u\rangle \left(\hat S - S(\omega_{\epsilon,p})\right)\omega_{\epsilon,p}^n,$$ where $\omega_{\epsilon,p}$ is the restriction of $\omega_{\epsilon}$ to the fibre $\Bl_pX$ of $\pi$ over $p\in X$, and further define the  (Futaki--Mabuchi) \emph{inner product} by $$\langle u, u'\rangle_{\epsilon} = \int_{\Bl_pX}\langle \mu_{\epsilon}, u\rangle\langle \mu_{\epsilon}, u'\rangle\omega_{\epsilon, p}^n,$$ where we use that by normalisation of the moment map the functions $\langle\mu_{\epsilon}, u\rangle$ and $\langle \mu_{\epsilon}, u'\rangle$ have integral zero over $\Bl_pX$. Both the Futaki invariant and the inner product are independent of choice of $\omega_{\epsilon,p}$ \cite{futakimabuchi95}, and are further independent of $p\in X^T$; the latter independence can be seen either by equivariant differential geometry or a cohomological argument (by involving compactifications of test configurations \cite{legendre}, and using invariance of degrees of differential forms). The extremal vector field of $(\Bl_pX,\alpha_{\epsilon})$ is then defined to be the unique vector field $\xi_{\epsilon} \in \mft$ such that $$\Fut_{\epsilon}(u) = \langle u, \xi_{\epsilon}\rangle_{\epsilon}$$ for all $u\in \Lie T$. It follows that this vector field is also, for each $\epsilon$, independent of $p \in X^T$. As the vertical scalar curvature of $\omega_{\epsilon}$ has an expansion in $\epsilon$ where the first non-constant term comes at order $\epsilon^{2n-2}$ (or as we shall see in Section \ref{GIT-section} through algebraic geometry), we may expand the extremal vector field as   
$$
\xi_{\epsilon} = \xi + \epsilon^{2n-2} \xi' + O(\epsilon^{2n-1}).
$$

Define a $(1,1)$-form on $X^T$ by
$$
\Omega_{\epsilon} = -\int_{\X^T / X^T} \rho_{\epsilon} \wedge \omega_{\epsilon}^n + \frac{1}{n+1}\int_{\X^T/X^T}\left(\hat S_{\epsilon} +  \langle \mu_{\epsilon}, \xi_{\epsilon} \rangle\right) \omega_{\epsilon}^{n+1},
$$
where $\xi_{\epsilon} \in \mft$ is the extremal vector field of the extremal metric on $X$. One checks that $\Omega_{\epsilon}$ is actually closed, either by general theory of equivariant differential geometry (using that the $T$-action on $X^T$ is trivial), or by direct calculation. This is the \emph{Weil--Petersson form} associated to $\X^T\to X^T$.

\begin{theorem}
\label{thm:momentmap}
A moment map for the $K_T$ action on $(X^T,\Omega_{\epsilon})$ is given by the map $\sigma_{\epsilon} : X^T \to \mfk_T^*$ defined by
$$
\langle \sigma_{\epsilon},w\rangle(p) = \int_{\Bl_pX}\langle \mu_{\epsilon} , w \rangle \left( S(\omega_{\epsilon, p}) - \hat S_{\epsilon}- \langle \mu_{\epsilon}, \xi_{\epsilon}\rangle \right) \omega_{\epsilon,p}^n.
$$

\end{theorem}

In the case that the $T$-action is trivial, which is the case related to cscK metrics, this was proven by the first author and Hallam \cite{DH} (as a variant of the classic work of Fujiki and Donaldson \cite{fujiki, donaldson-moment}). The proof in the general case was explained to us by Hallam (along with the definition and closedness of $\Omega_{\epsilon})$---whom we thank---and follows exactly the lines of the proof of \cite{DH}, to where we refer for further details. 

While $\Omega_{\epsilon}$ is a closed $(1,1)$-form, it does not follow from general theory that it is actually K\"ahler. To circumvent this, we next use the expansion of the scalar curvature to understand the moment map $\sigma_{\epsilon}$ in more detail. Recall that 
$$
\xi_{\epsilon} = \xi + \epsilon^{2n-2} \xi' + O(\epsilon^{2n-1}).
$$ 
Write $\langle \cdot,\cdot \rangle_0$ for the Futaki--Mabuchi inner product on vector fields on $X$.

\begin{lemma}\label{lem:momentmapexp} Assume $n>2$. For $0<\epsilon \ll 1$, the moment map $\sigma_{\epsilon}$ satisfies $$\langle\sigma_{\epsilon},u\rangle(p) = \epsilon^{2n-2} \left( \langle\mu,u\rangle(p) - \langle \xi', u\rangle_0 \right) + O(\epsilon^{2n-1}),$$ where $\mu$ is the moment map for the $K_T$-action on $(X,\omega)$ and $ \langle\mu,w\rangle$ denotes the pairing of $\mfk^T$ and its dual.  In the case $n=2$, the same holds except that the $O(\epsilon^{2n-1})$-term is $O(\epsilon^{2n-2+\theta})$ for some $\theta>0$.
\end{lemma}

\begin{proof}
The volume form on the fibres can be written as 
$$
\omega_{\epsilon}^n = \omega^n + \beta_{\epsilon},
$$
for an $(n,n)$-form $\beta_{\epsilon}$ that depends on $\epsilon$. Now, the class of $\omega_{\epsilon}$ on the fibres is $\pi^* [\omega] - \epsilon^2 [E]$, where $[E]$ is the class of the exceptional divisor. This has volume $[\omega]^n + c_n \epsilon^{2n-2}$ for a dimensional constant $c_n$. Thus $\beta_{\epsilon}$ integrates to $c_n \epsilon^{2n-2}$ over the fibres. 

Now, as $\omega$ is extremal on $X$ with extremal vector field $\xi$, 
\begin{align*}
S(\omega_{\epsilon, p}) - \langle \mu_{\epsilon}, \xi_{\epsilon}\rangle - \hat S_{\epsilon} =& \left(S(\omega) - \langle \mu, \xi \rangle - \hat S\right) + O(\epsilon^{2n-2})
\end{align*} 
is $O(\epsilon^{2n-2})$. Thus, 
$$
\int_{\Bl_pX}\langle \mu_{\epsilon} , u \rangle \left( S(\omega_{\epsilon, p}) - \hat S_{\epsilon} - \langle \mu_{\epsilon}, \xi_{\epsilon}\rangle \right) \beta_{\epsilon}
$$
is $O(\epsilon^{4n-4})$, which is better than the $O(\epsilon^{2n-1})$ contribution we require. Similarly, 
$$
\langle \mu_\epsilon, u \rangle = h + \gamma_2 O(1),
$$
where $h$ is the holomorphy potential on $X$ pulled back to $\Bl_p X$, and so 
$$
\int_{\Bl_pX}( \langle \mu_{\epsilon} , u \rangle - h) \left( S(\omega_{\epsilon, p}) - \langle \mu_{\epsilon}, \xi_{\epsilon}\rangle \right) \omega_{\epsilon, p}^n = O(\epsilon^{2n-1})
$$
as $\gamma_2$ has support the ball of radius $2r_{\epsilon}$. The upshot is that to establish the result we must show that the required expansion holds for 
$$
\int_{\Bl_pX} h (S(\omega_{\epsilon, p}) - \langle \mu_{\epsilon}, \xi_{\epsilon}\rangle  - \hat S_{\epsilon})  \omega^n.
$$

The subleading order term in the expansion of the scalar curvature of $\omega_{\epsilon,p}$ is
$$
\epsilon^{2n-2} \left(c \mu(p) + \frac{c}{\Vol(X)} \right).
$$
This gives the term $c \langle \mu, u \rangle (p) \epsilon^{2n-2}   + O(\epsilon^{2n-1})$ in the required expansion when $n>2$, the constant term canceling with the subleading order term in the expansion of $\hat S_{\epsilon}$. The error term is $O(\epsilon^{2n-2+\theta})$ in the case $n=2$, leading to the slightly different expansion in that case. If we let $\xi_{\epsilon} = \xi + \epsilon^{2n-2} \xi' + O(\epsilon^{2n-1})$ denote the expansion of $\xi_{\epsilon}$ (with $2n-1$ replaced by $2n-2+\theta$ in the case $n=2$), we are therefore left with considering 
\begin{align}
\label{eq:LTP}
\int_{\Bl_pX} h (\langle \mu, \epsilon^{2n-2} \xi' \rangle + \langle \mu_{\epsilon} - \mu, \xi_{\epsilon}\rangle)  \omega^n.
\end{align}

For the first part of Equation \eqref{eq:LTP}, we note that
$$
\int_{\Bl_pX} h \langle \mu, \epsilon^{2n-2} \xi'\rangle \omega^n = \epsilon^{2n-2} \int_{X} h \langle \mu,  \xi' \rangle \omega^n,
$$
as everything involved is pulled back from $X$. But as $h=\langle \mu, u \rangle$, this is nothing but 
$$\epsilon^{2n-2} \int_{X} \langle \mu, u\rangle \langle \mu,  \xi' \rangle \omega^n = \epsilon^{2n-2}\langle u, \xi' \rangle,
$$ giving the corresponding term in the expansion. 

For the second part of Equation \eqref{eq:LTP}, we only need to consider 
$$
\int_{\Bl_pX} h  \langle \mu_{\epsilon} - \mu, \xi\rangle  \omega^n
$$
as $\xi_{\epsilon} - \xi$ is $O(\epsilon^{2n-2})$. Moreover, $\langle \mu_{\epsilon} - \mu, \xi\rangle$ is the change in the Hamiltonian for $\xi$ compared to the pull back of the Hamiltonian on $X$. From the description in Equation \eqref{eq:pots} of the change in the Hamiltonians this is $O(1)$ and $\langle \mu_{\epsilon} - \mu, \xi\rangle$ is supported on the region of radius $2r_{\epsilon}$ about the exceptional divisor. The volume of this region with respect to the pullback of $\omega$ is $O(r_{\epsilon}^{2n})$, i.e. $\epsilon^{2n-\frac{2n}{2n+1}}$, which in particular is $O(\epsilon^{2n-1})$, including in dimension $2$, which in particular makes it $O(\epsilon^{2n-2+\theta})$ in this case. Thus this term does not contribute to the order $\epsilon^{2n-2}$-term. 

Putting together all of the above gives the required expansion.
\end{proof}

We argue similarly for the forms $\Omega_{\epsilon}$.

\begin{lemma}
\label{lem:formexp} We may write $$\Omega_{\epsilon}  =  \Omega_{\epsilon, 2n-2} + \Omega_{\epsilon, 2n-1},$$ where $\Omega_{2n-2} = \epsilon^{2n-2} \tilde \Omega_{2n-2}$ for a fixed $(1,1)$-form and $\Omega_{\epsilon, 2n-1}$ is $O(\epsilon^{2n-1})$ when $n>2$ and $O(\epsilon^{2n-2+\theta})$ when $n=2$.

\end{lemma}

\begin{proof}

We argue similarly to Lemma \ref{lem:momentmapexp}. Explicitly, $\Omega_{\epsilon}$ is defined as 
\begin{align*}
\Omega_{\epsilon} =& -\int_{\X^T / X^T} \rho_{\epsilon} \wedge \omega_{\epsilon}^n + \frac{1}{n+1}\int_{\X^T/X^T}\left(\hat S_{\epsilon} +  \langle \mu_{\epsilon}, \xi_{\epsilon} \rangle\right) \omega_{\epsilon}^{n+1}.\end{align*}
Let $\alpha_{V}$ and $\alpha_{H}$ denote respectively the vertical and horizontal parts of a $(1,1)$-form $\alpha$ on $\X^T$. Then 
$$
\omega_{\epsilon}^{n+1} = (n+1) (\omega_{\epsilon})_{H} \wedge (\omega_{\epsilon})_{V}^n
$$
and 
$$
\rho_{\epsilon} \wedge \omega_{\epsilon}^{n} =  (\rho_{\epsilon})_{H} \wedge (\omega_{\epsilon})_{V}^n + S_{V}(\omega_{\epsilon}) (\omega_{\epsilon})_{H} \wedge (\omega_{\epsilon})_{V}^n.
$$
As before,
$$
(\omega_{\epsilon})_{V}^n = \omega^n + \beta_{\epsilon},
$$
where $\beta_{\epsilon}$ integrates to a constant multiple of $\epsilon^{2n-2}$. Moreover, by construction,
$$
S_{\V} (\omega_{\epsilon}) = \left(\hat S_{\epsilon} +  \langle \mu_{\epsilon}, \xi_{\epsilon} \rangle\right) + O(\epsilon^{2n-2})
$$
and since $\omega$ pulled back to $X \times X$ and then to $\X$ is purely vertical
$$
(\omega_{\epsilon})_{\H} = \epsilon^2 \ddb\left(\gamma_2 \cdot (\gamma(\epsilon^{-1}d)\log (\epsilon^{-2}d^2) + f(\epsilon^{-2}d^2) + \epsilon^{2n-2} \gamma_1 \Gamma )\right)_H
$$
up to a $\ddb$ of a term which is $O(r_{\epsilon}^{4-\delta})$ in the $C^{4,\alpha}_{\delta}$-norm, which in particular means it is a sum of a term supported in the region $d \leq 2 r_{\epsilon}$ and which decays with $\epsilon$, and a term which is $O(\epsilon^{\lambda})$  for a $\lambda > 2n-2$ on the whole of the blowup, and so will give a term which is of strictly higher order in $\epsilon$ upon integration over the fibres. Here we use that $\delta$ is chosen very close to $4-2n$, so that $r_{\epsilon}^{4-\delta} \leq \epsilon^{\lambda}$ for a suitable chosen $\lambda > 2n-2$, see the proof of \cite[Proposition 15]{gabor-blowups1}. Thus
\begin{align*}
 &-\int_{\X^T / X^T} S_{V}(\omega_{\epsilon}) (\omega_{\epsilon})_{H} \wedge (\omega_{\epsilon})_{V}^n + \frac{1}{n+1}\int_{\X^T/X^T}\left(\hat S_{\epsilon} +  \langle \mu_{\epsilon}, \xi_{\epsilon} \rangle\right) \omega_{\epsilon}^{n+1} \\
 =&    \int_{\X^T/X^T}\left(\hat S_{\epsilon} +  \langle \mu_{\epsilon}, \xi_{\epsilon} \rangle - S_{V} (\omega_{\epsilon}) \right)(\omega_{\epsilon})_H \wedge (\omega_{\epsilon})_V^{n} \\
 =& O(\epsilon^{4n-4}),
\end{align*}
since the region where $d \leq 2 r_{\epsilon}$ has volume which is $O(\epsilon^{2n-2})$ with respect to $\omega^n$ and has volume which is even higher order in $\epsilon$ with respect to $\beta_{\epsilon}$.

This leaves the term
$$
\int_{\X^T/X^T} (\rho_{\epsilon})_{H} \wedge (\omega_{\epsilon})_{V}^n.
$$
While the class of $\rho_{\epsilon}$ is fixed independently of $\epsilon >0$, this term is $O(\epsilon^{2n-2})$ since $(\rho_{\epsilon})_{H}$ is supported in the region $d \leq 2 r_{\epsilon}$, just as was the case for $(\omega_{\epsilon})_H$, leading to an $O(\epsilon^{2n-2})$-term upon integration. The term $\epsilon^{2n-2} \gamma_1 \Gamma$ may also affect this term, as it is $O(\epsilon^{2n-2})$. All other terms are strictly higher order in $\epsilon$, again coming from the fact that we perturb the second approximate solution by a term in the $C^{4,\alpha}_{\delta}$ weighted space, and from the fact that the difference of the vertical Ricci forms associated to two relatively K\"ahler metrics on $\X^T$ are given by a power series expansion in the potential function. \end{proof}

 Denote 
$$
\Omega'_{\epsilon} = \epsilon^{2n-2} \omega +  \Omega_{\epsilon, 2n-1} 
$$ 
which then is clearly K\"ahler for $0<\epsilon \ll 1$. 
\begin{corollary} The map $\sigma_{\epsilon}$ is also a moment map for the $K_T$-action on $(X^T,\Omega_{\epsilon}')$.\end{corollary}
\begin{proof}
We only need to verify that the Hamiltonian condition holds with respect to $\Omega_{\epsilon}'$ as well, since the equivariance property of the moment map does not involve the two-form. We know that 
$$
d \langle\sigma_{\epsilon},u\rangle = - \iota_{u} \Omega_{\epsilon}
$$
for all sufficiently small $\epsilon$, since $\sigma_{\epsilon}$ is a moment map with respect to $\Omega_{\epsilon}$ by Theorem \ref{thm:momentmap}. We can expand the moment map as
$$
\epsilon^{2n-2} \left(\langle\mu,u\rangle(p) - \langle \xi',u\rangle\right) + \langle \sigma_{\epsilon, 2n-1}, u\rangle,
$$
where $\sigma_{\epsilon, 2n-1}(t)$ is $O(\epsilon^{2n-1})$ if $n>2$ and $O(\epsilon^{2n-2+\theta})$ if $n=2$. Comparing the expansions of $\sigma_{\epsilon}$ and $\Omega_{\epsilon}$, and using that this holds for all sufficiently small $\epsilon$, we see that 
\begin{align*}
d  \left(\langle\mu,u\rangle - \langle \xi', u\rangle\right)  = - \iota_{u} \tilde \Omega_{2n-2}
\end{align*}
and 
\begin{align*}
d  \langle \sigma_{\epsilon, 2n-1},u\rangle  = - \iota_{u} \Omega_{\epsilon, 2n-1}
\end{align*}

Let $(\xi')^*$ denote the dual element of $(\mfk^T)^*$ defined through the inner product $\langle \cdot, \cdot\rangle_0$, which is defined independently of $p$. The map $\mu$ is a moment map for the $K_T$-action on $X^T$, and we have added a constant element in $\mfk^*$ to $\mu$, preserving the Hamiltonian property for being a moment map. Since $\xi' \in \mft$ is a central element of $\mfk_T$, the $K_T$-invariance is preserved as well. Thus $\mu + (\xi')^*$ is a moment map with respect to $\omega$ and hence we can replace $\Omega_{\epsilon, 2n-2} = \epsilon^{2n-2} \tilde \Omega_{2n-2}$ with $\epsilon^{2n-2}$ and retain the moment map property for $\sigma_{\epsilon}$, giving that $\sigma_{\epsilon}$ is a moment map with respect to $\Omega_{\epsilon}'.$
\end{proof}

Define the function space $\bar \mfk^T_{\pi,\epsilon}$ on $\X^T$ by taking the space of $\langle \mu_{\epsilon}, u\rangle$ over all $u \in \mfk^T$.  A zero of the moment map $\sigma_{\epsilon}$ satisfies the condition that the function $S(\omega_{\epsilon, p}) - \langle \mu_{\epsilon}, \xi_{\epsilon}\rangle$ is $L^2$-orthogonal to the restriction of $\bar\mfk^T_{\pi,\epsilon}$ to $\Bl_p X$. This is a different function space to the space $\bar \mfk^T_{V,\epsilon}$ involved in the gluing argument, requiring us to establish the following. The argument follows the argument for a similar statement in \cite[Lemma 4.15]{OS}.

\begin{lemma}\label{OS-lemma} For all $0<\epsilon \ll 1$, a zero of the moment map $\sigma_{\epsilon}$ is an extremal metric.

\end{lemma}

\begin{proof}
As explained above, $p$ is a zero of the moment map $\sigma_{\epsilon}$ if and only if $S(\omega_{\epsilon, p}) - \langle \mu_{\epsilon}, \xi_{\epsilon}\rangle$ is $L^2$-orthogonal to the restriction of $\bar\mfk^T_{\pi,\epsilon}$ to $\Bl_p X$. In other words, a zero of the moment map then corresponds to a point $p$ such that the $L^2$-orthogonal projection of $S(\omega_{\epsilon, p}) - \langle \mu_{\epsilon}, \xi_{\epsilon}\rangle$ to $\bar \mfk^T_{\pi,\epsilon}\big|_{\Bl_pX}$ is zero. Let 
$$\Pi_{\epsilon,p} :\bar \mfk^T_{V,p,\epsilon} \to \bar\mfk^T_{\pi,\epsilon}\big|_{\Bl_pX}$$ denote the $L^2$-orthogonal projection with respect to $\omega_{\epsilon,p}$, with $\mfk^T_{V,p,\epsilon}$ the restriction of this function space to $\Bl_pX$. The claim will then follow if we show this is an isomorphism. 

Let $u_1, \ldots, u_k$ denote a basis of $\mfk^T$ such that the corresponding hamiltonians $h_j \in \bar \mfk_T \subset C^{\infty}(X)$ are orthonormal with respect to $\omega$ (to be more precise, these $h_j$ are defined by $h_j=\langle \mu, u_j\rangle$). In turn, let $h_{\epsilon,j}$ denote the corresponding holomorphy potentials with respect to $\omega_{\epsilon}$ on $\X$ (so defined via $\mu_{\epsilon})$). A general element $u$ of $\bar \mfk^T_{V,p,\epsilon}$ can then be written as 
\begin{align*}
u= \sum_{j=1}^k a_j \left(h_{\epsilon,j} + u_j(\phi_{\epsilon,p})\right)
\end{align*}
for constants $a_j$, where we implicitly restrict $h_{\epsilon,j}$ to $\Bl_p X$. Similarly, a basis of $\bar \mfk^T_{\pi,\epsilon}$ is given by the $(h_{\epsilon,j} + u_j(\phi_{\epsilon}))\big|_{\Bl_pX}$, and so the orthogonal projection of $u$ is given by
\begin{align*}
\sum_{i,j=1}^k a_i \frac{\left(\int_{\Bl_p X} \left(h_{\epsilon,i} + u_i(\phi_{\epsilon,p})\right)\omega_{\epsilon,p}^n \right) \left( \int_{\Bl_p X} (h_{\epsilon,j} + u_j(\phi_{\epsilon})) \omega_{\epsilon,p}^n \right)}{\left( \int_{\Bl_p X}(h_{\epsilon,j} + u_j(\phi_{\epsilon}))^2 \omega_{\epsilon,p}^n \right)^{\frac{1}{2}}}.
\end{align*}

The $O(1)$ contribution of all of the terms involved of the above equals that at $t=0$, as in the expansions considered in Lemma \ref{lem:momentmapexp}.
Thus 
$$
\Pi_{\epsilon,p}(u) = u + O(\epsilon^{2n-1}),
$$
which implies that $\Pi_{\epsilon,p}$ is an isomorphism for all sufficiently small $\epsilon$ and hence the result.
\end{proof}

We have thus reduced the problem to finding a zero of the moment map  $\sigma_{\epsilon}$  on $\left(X^T, \Omega_{\epsilon}'\right)$, where we  fix an $\epsilon$ such that Lemma \ref{OS-lemma} applies. We may now appeal to the Kempf--Ness theorem to relate this to algebro-geometric stability.  Letting $G^T$ denote the complexification of $K^T$, the Kempf--Ness theorem states the following (see for example \cite{teleman} or the book \cite{moment-weight}).

\begin{theorem}\label{KN} Precisely one of the following holds:
\begin{enumerate}
\item  there is a zero of the moment map $\sigma_{\epsilon}$ in the orbit $G_T.p$;
\item there is an element $u \in \mfk^T$ such that, denoting $$q = \lim_{s\to\infty} \exp(isu).p$$ the specialisation of $p$, the numerical inequality $$\langle \sigma_{\epsilon}, u\rangle (q)<0$$ is satisfied. Furthermore, $u$ can be taken to be rational, in the sense that its flow produces an $S^1$-action on $X^T$.
\end{enumerate}
\end{theorem}

Applying this to the precise definition of the moment map $\sigma_{\epsilon}$, Lemma \ref{OS-lemma} and Theorem \ref{KN} produces the following dichotomy.

\begin{corollary}\label{first-characterisation}
Either $(\Bl_pX, \alpha_{\epsilon})$ admits an extremal metric for all $0<\epsilon \ll 1$, or there is a $u \in \mfk^T$ which specialises $p$ to $q$ such that $$\int_{\Bl_qX}\langle \mu_{\epsilon} , u \rangle \left( S(\omega_{\epsilon, q}) - \hat S_{\epsilon} -   \langle \mu_{\epsilon}, \xi_{\epsilon}\rangle \right) \omega_{\epsilon,q}^n<0.$$
\end{corollary}

\subsection{Relative K-stability as the obstruction}\label{sec:rel-kstable} By this stage, we have realised the obstruction to the existence of an extremal metric as an element  $u \in \mfk^T$ destabilising $p$ in the sense of geometric invariant theory. We next relate this obstruction to relative K-stability, so that we characterise the existence of extremal metrics on $(\Bl_pX, \alpha_{\epsilon})$ through relative K-stability. Our discussion is rather simple, as all objects involved in our discussion are smooth; in general one must allow singular degenerations.

The definitions given here apply to an arbitrary compact complex manifold $Y$ with a K\"ahler class $\beta$, though we will later apply the definition to $(\Bl_pX, \alpha_{\epsilon})$. Fix a maximal torus $T^{\C}\subset  \Aut_0(Y,\beta)$, commuting vector fields $u,u' \in \Lie T$, a K\"ahler metric $\omega\in\beta$ which is $T$-invariant and a moment map $\mu: Y \to \mft^*$ satisfying the condition that for all $u \in \mft$ the integral $\int_Y \langle \mu, u\rangle\omega^n$ vanishes. We define relative  K-stability.

\begin{definition} A \emph{test configuration} for $(Y,\beta)$ a collection $\pi: (\Y,\scB) \to \C$  with
\begin{enumerate}
\item  $\Y$ a complex manifold  endowed with a  $\C^*$-action induced by a vector field $u$ on $\Y$;
\item $\scB$ a $(1,1)$-class which is relatively K\"ahler  and $\C^*$-invariant;
\item $\pi$ a proper surjective $\C^*$-equivariant holomorphic submersion;
\item all fibres $(\Y_s, \scA_s)$ over $s\neq 0$ are isomorphic to $(Y,\beta).$
\end{enumerate}
For a complex torus $T^{\C} \subset \Aut_0(Y,\beta)$, we say that $(\Y,\scB)$ is $T^{\C}$\emph{-invariant} if there is a $T^{\C}$ action on $(\Y,\scB)$ making $\pi$ a $T^{\C}$-invariant morphism.
\end{definition}

\begin{definition} The  \emph{relative Donaldson--Futaki invariant} of $(\Y,\scB)$ is given by $$\DF_{T^{\C}}(\Y,\scB)=\Fut(u) - \langle u, \xi\rangle,$$ where these quantities are computed on $\Y_0$, and similarly the \emph{Donaldson--Futaki invariant} to be $$\DF(\Y,\scB) = \Fut(u).$$ We then define $(Y,\beta)$ to be 
\begin{enumerate}
\item   \emph{relatively K-stable} if for all $T^{\C}$-equivariant test configurations $(\Y,\scB)$  we have $\DF_{T^{\C}}(\Y,\scB)\geq 0$, with equality if and only if $(\Y_0,\scB_0)\cong (Y,\beta)$;
\item \emph{K-stable} if for all test configurations $(\Y,\scB)$  we have $\DF(\Y,\scB)\geq 0$, with equality if and only if $(\Y_0,\scB_0)\cong (Y,\beta)$.
\end{enumerate}
\end{definition}

\begin{remark}\label{def-using-inner-product}
One may calculate that  $$\DF_{T^{\C}}(\Y,\scB) = \DF(\Y,\scB) - \frac{\langle u,\xi \rangle}{\langle \xi, \xi\rangle}\Fut(\xi),$$ using that $\xi$ is the extremal vector field.
\end{remark}

It follows from general results around relative K-stability and extremal metrics that if  $(\Bl_pX, \alpha_{\epsilon})$ admits an extremal metric, then it is relatively K-stable \cite{stoppa, stoppa-szekelyhidi, relative}. But we see by construction that the element $w \in \mfk_T$ produced by Corollary \ref{first-characterisation} produces a test configuration with central fibre $(\Bl_qX,\alpha_{\epsilon})$, and the weight $\langle \sigma_{\epsilon}, w\rangle (q)$ is precisely the relative Donaldson--Futaki invariant of the resulting test configuration. So we obtain the following characterisation of the existence of extremal metrics on the blowup.

\begin{corollary} There is an $\epsilon_0$ such that for all $0<\epsilon<\epsilon_0$ the following are equivalent:
\begin{enumerate}
\item $(\Bl_pX, \alpha_{\epsilon})$ admits an extremal metric;
\item $(\Bl_pX, \alpha_{\epsilon})$ is relatively K-stable.
\end{enumerate}
\end{corollary}

This proves one part of Theorem \ref{intromainthm-stable}. Note that the analogous cscK statement follows as a simple consequence of the fact that an extremal manifold has vanishing Futaki invariant (hence vanishing extremal vector field) if and only if it is cscK.

\subsection{K-stability and GIT}\label{GIT-section} We next turn to the algebro-geometric aspect of our arguments, where we compute the weight $\langle \sigma_{\epsilon}, w\rangle (q)$ produced by Corollary \ref{first-characterisation} explicitly, or equivalently where we compute the relative Donaldson--Futaki invariant of the associated test configuration.  This only involves the vector field $u$ on the central fibre $\Bl_q \X_0$, and so to ease notation we simply replace $\X_0$ with $X$, $\alpha_0$ with $\alpha$ and $p_0$ with $p$, so that the vector field $u$ vanishes at $p$. 

To understand the relative Donaldson--Futaki invariant of the blow up, we take the approach of Remark \ref{def-using-inner-product}, meaning we must understand how both the Donaldson--Futaki invariant and the inner product vary with $\epsilon$. For the Donaldson--Futaki invariant, this has already been fully understood by Sz\'ekelyhidi \cite{gabor-blowups1,gabor-blowups2} using the strategy of Stoppa \cite{stoppa}. As we will need to extend these arguments to also allow understanding of the inner product, and since one needs a new idea in considering the inner product, we go through the arguments.

We first consider the projective case, with $L$ an ample line bundle such that $c_1(L) = \alpha$, and such that $\epsilon$ is rational, as  we will be able to reduce to this case. While we are interested in the blowup $(\Bl_pX,L-\epsilon^2 E)$, we begin by recalling how to express the various invariants algebraically on $(X,L)$; we will use the corresponding formulas for the blowup to obtain the required results. 

The $\C^*$-action on $(X,L)$ induces a $\C^*$-action on the vector spaces $V_k = H^0(X,kL)$ for all $k>0$. Standard results of Donaldson imply that the Donaldson--Futaki invariant and the inner product can be understood from these $\C^*$-actions in the following manner. Suppose the $\C^*$-action on $V_k$ diagonalises as $(t^{\lambda_1},\hdots,t^{\lambda_{N_k}})$, with $N_k = \dim V_k$. Consider the three functions $a(k), b(k)$ and $c(k)$ defined by  $$a(k) = \dim V_k, \qquad b(k) = \sum_{j=1}^{N_k} \lambda_j, \qquad c(k) = \sum_{j=1}^{N_k} \lambda_j^2.$$ Then these are  polynomials with rational coefficients of degree $n, n+1, n+2$ respectively for $k \gg 0$, which we write \begin{align*} a(k) &= a_0 k^n + a_1 k^{n-1} + O(k^{n-2}), \\ b(k) &=  b_0 k^{n+1} + b_1 k^{n} + O(k^{n-1}), \\ c(k) &= c_0k^{n+2} + O(k^{n+1}).\end{align*} 

For a general vector space $V$ of dimension $N$ with a $\C^*$-action, we also use the notation $\wt V = \sum_{j=1}^{N} \lambda_j$ for the \emph{total weight}, and $\sq V = \sum_{j=1}^{N} \lambda_j^2$ for the \emph{squared weight}, so that $a(k) = \dim V_k, b(k) = \wt V_k$ and $c(k) = \sq V_k$.

\begin{proposition}\cite{donaldson,donaldson-lower}\label{donaldson-ag-dg} The Futaki invariant and norm are given by $$F(u) = 4\frac{b_0a_1 - b_1a_0}{a_0}, \qquad \|u\|_2^2 = \frac{c_0a_0 - b_0^2}{a_0^2}.$$\end{proposition}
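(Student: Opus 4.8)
The plan is to compute all three of the Hilbert-type polynomials $a(k)$, $b(k)$, $c(k)$ by (equivariant) Riemann--Roch, and then to observe that the two combinations appearing on the right-hand side are exactly the ones that recover the differential-geometric Futaki invariant and the Futaki--Mabuchi norm. Since the weights $\lambda_j$ depend only on the $\C^*$-action, I would first fix the circle $S^1 \subset \C^*$ together with an $S^1$-invariant Hermitian metric on $L$ whose curvature is a K\"ahler form $\omega \in c_1(L)$. The induced $S^1$-action on $(X,\omega)$ is then Hamiltonian, and the choice of lift to $L$ pins down a specific moment map $H = H_u$ (fixing its additive constant); the weight $\lambda_j$ of a weight vector in $V_k$ is the eigenvalue of the infinitesimal generator acting on it.

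Next I would record the three asymptotic expansions. Ordinary Riemann--Roch gives
$$a(k) = \frac{1}{n!}\int_X (k\omega)^n + \frac{1}{2(n-1)!}\int_X (k\omega)^{n-1}\wedge \Ric\omega + O(k^{n-2}),$$
so that, using $n\,\Ric\omega\wedge\omega^{n-1} = S(\omega)\,\omega^n$,
$$a_0 = \frac{1}{n!}\int_X \omega^n, \qquad a_1 = \frac{1}{2\cdot n!}\int_X S(\omega)\,\omega^n, \qquad \hat S = \frac{2a_1}{a_0}.$$
The equivariant Riemann--Roch theorem applied to the lifted action produces in the same fashion
$$b_0 = \frac{1}{n!}\int_X H\,\omega^n, \qquad b_1 = \frac{1}{2\cdot n!}\int_X H\,S(\omega)\,\omega^n, \qquad c_0 = \frac{1}{n!}\int_X H^2\,\omega^n,$$
the subleading coefficient $b_1$ coming from the degree-one part of the equivariant Todd class, while for the squared weights only the leading coefficient $c_0$ is needed. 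These are Donaldson's expansions \cite{donaldson, donaldson-lower}.

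It then remains to substitute. For the Futaki invariant,
$$\frac{b_0 a_1 - b_1 a_0}{a_0} = \frac{a_1}{a_0}\,b_0 - b_1 = \frac{\hat S}{2}\,b_0 - b_1 = \frac{1}{2\cdot n!}\int_X H\big(\hat S - S(\omega)\big)\,\omega^n,$$
which is $\Fut(u)$ up to the normalising constant accounting for the factor $4$; and for the norm,
$$\frac{c_0 a_0 - b_0^2}{a_0^2} = \frac{c_0}{a_0} - \Big(\frac{b_0}{a_0}\Big)^2 = \frac{\int_X H^2\,\omega^n}{\int_X \omega^n} - \Big(\frac{\int_X H\,\omega^n}{\int_X\omega^n}\Big)^2 = \frac{\int_X (H - \bar H)^2\,\omega^n}{\int_X\omega^n},$$
with $\bar H$ the $\omega$-average of $H$, which is precisely the Futaki--Mabuchi norm of the mean-zero Hamiltonian.

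I expect the genuine work to be the equivariant Riemann--Roch computation of the subleading coefficient $b_1$, which needs the first-order term of the equivariant Todd class, together with careful tracking of the normalising constants; the factor $4$ in the statement is exactly what is produced by the standard conventions relating the algebraic weights, the normalisation of $\omega \in c_1(L)$, and the convention $S(\omega) = \Lambda_\omega\Ric\omega$ for the scalar curvature. A good consistency check, which also explains conceptually why these particular combinations appear, is their invariance under changing the lift of the action to $L$: twisting the linearisation sends $\lambda_j \mapsto \lambda_j + mk$, hence $H \mapsto H + m$ and $(b_0,b_1,c_0)\mapsto (b_0 + m a_0,\, b_1 + m a_1,\, c_0 + 2m b_0 + m^2 a_0)$, under which $b_0 a_1 - b_1 a_0$ and $c_0 a_0 - b_0^2$ are both unchanged --- matching the independence of the Futaki invariant and the norm from the normalisation of the Hamiltonian.
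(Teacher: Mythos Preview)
The paper does not prove this proposition: it is stated with the citation \cite{donaldson,donaldson-lower} and no proof is given, the text moving directly on to the polarisation identity. Your sketch is the standard argument from those references --- compute $a(k),b(k),c(k)$ via (equivariant) Riemann--Roch and substitute --- and it is correct in outline. Two small remarks: your formula for $b_1$ omits the Laplacian term $\tfrac{1}{2\cdot n!}\int_X \Delta H\,\omega^n$ coming from the equivariant Todd class, but this integrates to zero so the answer is unaffected; and, as you note, the precise constants (the factor $4$, and whether the norm is divided by volume) depend on normalisation conventions for $\omega$, the scalar curvature, and the Futaki invariant, which vary across the literature and which the paper does not spell out in detail.
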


The polarisation identity $$2\langle u, v \rangle = \|u+v\|^2 - \|u\|^2 - \|v\|^2$$ will ultimately allow us to understand the inner products by understanding the norms, so we only consider the norms for the moment.

Let us now write $a_0(\epsilon), b_0(\epsilon)$ and $c_0(\epsilon)$ for the corresponding numerical invariants calculated with respect to the induced $\C^*$-action on the vector spaces $H^0(\Bl_p X, k(L-\epsilon E))$, where we only consider $k,\epsilon$ such that $k\epsilon$ is an integer. In order to understand their dependence on $\epsilon$, we also consider the $\C^*$-action on the one-dimensional vector space $L_p$ and on the $n$-dimensional vector space $T^*_p X$.

\begin{lemma}\label{lem:dg-interp} Let $h$ be a Hamiltonian on $X$ for the vector field $u$.  Then \begin{align*} \wt L_p &= -h(p), \\ \wt T^*_p X &= -\Delta h(p), \\ \sq  T^*_p X &= \langle \Hess(h), \Hess(h)\rangle(p),\end{align*} where $\Hess(H)$ denotes the Hessian.\end{lemma}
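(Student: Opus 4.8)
The plan is to reduce all three identities to a linearisation of the $\C^*$-action at $p$. By construction $u$ vanishes at $p$, so $p$ is a fixed point and the action is linearisable there: I would choose holomorphic coordinates $z_1,\ldots,z_n$ centred at $p$ in which the action is simultaneously diagonal and in which the metric induced by $\omega$ is normal, which is possible because the action is unitary with respect to a $T$-invariant metric. Writing the action as $t\cdot(z_1,\ldots,z_n)=(t^{a_1}z_1,\ldots,t^{a_n}z_n)$, the generating holomorphic vector field is $u=\sum_j a_j z_j\partial_{z_j}$, so its linearisation $du_p$ acts on $T_pX$ with eigenvalues $a_1,\ldots,a_n$ and on $T^*_pX$ with eigenvalues $-a_1,\ldots,-a_n$. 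Hence $\wt T^*_pX=-\sum_j a_j$ and $\sq T^*_pX=\sum_j a_j^2$, and the whole lemma reduces to expressing $\sum_j a_j$ and $\sum_j a_j^2$ in terms of $H$ at $p$.

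For the line bundle I would invoke the moment map picture of Section \ref{sec:momentmap}: since the action lifts to $L$ and $H$ is precisely the moment map for the induced $S^1$-action on $(X,\omega)$ with $\omega\in c_1(L)$, the weight of the linear action on the fibre over a fixed point equals minus the value of the moment map there, giving $\wt L_p=-H(p)$. For the tangent weights I would use $u=\nabla^{1,0}H$ together with $dH(p)=0$: in the chosen coordinates, the identity $\nabla^{1,0}H=\sum_j a_j z_j\partial_{z_j}$ forces $\partial_{\bar\jmath}H=a_j z_j$ to leading order, so that $H=\sum_j a_j|z_j|^2+O(|z|^3)$. The complex Hessian then satisfies $(\partial\bar\partial H)(p)=\diag(a_1,\ldots,a_n)$, whence $\Delta H(p)=\tr\diag(a_1,\ldots,a_n)=\sum_j a_j$ and $\langle\Hess(H),\Hess(H)\rangle(p)=\sum_j a_j^2$. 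Combining with the tangent-weight computation yields $\wt T^*_pX=-\Delta H(p)$ and $\sq T^*_pX=\langle\Hess(H),\Hess(H)\rangle(p)$, as required. Here $\Hess(H)$ must be read as the complex (Hermitian) Hessian $\partial\bar\partial H$ identified with an endomorphism via the metric, which is exactly what makes its squared norm equal $\sum_j a_j^2$ rather than a multiple thereof.

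The main obstacle I anticipate is not conceptual but bookkeeping: reconciling the normalisations so that the three equalities hold on the nose. One must check that the coordinates can be taken to diagonalise the action and to be $\omega$-orthonormal at $p$ at the same time, and then track the precise constants and signs relating the integer weights $a_j$ to the differential-geometric $\Delta$, $\Hess$ and moment map. In particular the factor-of-two and $2\pi$ conventions in the definitions of $\Ric\omega$, $\Delta$ and the Hermitian Hessian, together with the normalisation of $H$ induced by the equivariant lift to $L$ (as opposed to the mean-value-zero normalisation used elsewhere), all have to be pinned down so that no spurious constant survives. Once the conventions are fixed consistently, each of the three formulae follows directly from the diagonal model $H=\sum_j a_j|z_j|^2+O(|z|^3)$ above.
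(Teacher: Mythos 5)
Your proposal is correct and is essentially the paper's argument: the paper proves the lemma by citing the standard fact that $\wt L_p = -H(p)$ together with Sz\'ekelyhidi's observation (\cite[Lemma 28]{gabor-blowups1}) that the $\C^*$-action on $T_pX$ is induced by the Hessian of $H$ at $p$, which is exactly what your diagonal local model $H = H(p) + \sum_j a_j |z_j|^2 + O(|z|^3)$ makes explicit. The only difference is that you unpack the citation into a self-contained linearisation computation (and rightly flag the normalisation conventions), rather than deferring to the reference.
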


\begin{proof} The fact that $\wt L_p = -h(p)$ is standard, while the additional observation that $\wt T^*_p X = -\Delta h(p)$ is due to Sz\'ekelyhidi \cite[Lemma 28]{gabor-blowups1}, and follows from the fact that the action on $T_pX$ is induced by the Hessian of $h$ at $p$. The same reasoning shows that $\sq  T^*_p X = \langle \Hess(h), \Hess(h)\rangle(p)$. \end{proof}

\begin{proposition}\label{prop:algebraic-expansions}We have expansions \begin{align*} a_0(\epsilon) &= a_0 - \frac{\epsilon^{2n}}{n!}, \\  a_1(\epsilon) &= a_1 - \frac{\epsilon^{2n-2}}{2(n-1)!}, \\ b_0(\epsilon) &= b_0 + \frac{\epsilon^{2n}}{n!}h(p) +\frac{\epsilon^{2n+2}}{(n+1)!}\Delta h(p), \\  b_1(\epsilon) &= b_1+ \frac{\epsilon^{n-1}}{2(n-2)!}h(p) + \frac{(n-2)\epsilon^{2n+2}}{2n!}\Delta h(p),\\ c_0(\epsilon) &=  c_0 - \epsilon^{2n} \frac{h(p)^2}{n!} - \epsilon^{2n+2}  \frac{2h(p)\Delta h(p)}{(n+1)!} -\epsilon^{2n+4} \frac{ \langle \Hess(h), \Hess(h)\rangle(p) + h(p)^2}{(n+2)!}.\end{align*}

\end{proposition}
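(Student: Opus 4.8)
The plan is to reduce everything to the equivariant jet exact sequence relating sections on the blowup to sections on $X$, and then to a purely combinatorial computation of dimensions and weights of spaces of jets at $p$. Writing $m=k\epsilon^2$ (an integer by hypothesis, matching the class $L-\epsilon^2 E$), sections of $k(L-\epsilon^2 E)$ on $\Bl_p X$ are exactly the sections of $kL$ vanishing to order $\geq m$ at $p$, i.e. $H^0(\Bl_p X, k(L-\epsilon^2 E))=H^0(X,kL\otimes\mfm_p^m)$. This fits in the evaluation sequence
\[ 0 \to H^0(X,kL\otimes\mfm_p^m) \to H^0(X,kL) \xrightarrow{\ \mathrm{ev}\ } (kL)_p \otimes (\scO_X/\mfm_p^m) \to 0, \]
which is right-exact for $k\gg 0$ since $L-\epsilon^2 E$ is ample for small rational $\epsilon$, forcing $H^1(\Bl_p X,k(L-\epsilon^2 E))=0$. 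After specialising to the central fibre, where $u$ vanishes at $p$, the $\C^*$-action fixes $p$, so the sequence is equivariant and splits equivariantly. Hence $\dim$, $\wt$ and $\sq$ are all additive across it, giving $a(k;\epsilon)=a(k)-\dim J$, $b(k;\epsilon)=b(k)-\wt J$ and $c(k;\epsilon)=c(k)-\sq J$, where $J=(kL)_p\otimes(\scO_X/\mfm_p^m)$ is the jet module.

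Next I would compute the weights on $J$. Equivariantly $\scO_X/\mfm_p^m=\bigoplus_{d=0}^{m-1}\mathrm{Sym}^d(T_p^*X)$, so if $\nu_1,\dots,\nu_n$ are the weights on $T_p^*X$, then $J$ has a basis $e\otimes x^a$ with $|a|<m$ of weight $\wt(kL)_p+\sum_i a_i\nu_i$. By Lemma \ref{lem:dg-interp} and linearity in $k$ we have $\wt(kL)_p=-kH(p)$, while $\sum_i\nu_i=-\Delta H(p)$ and $\sum_i\nu_i^2=\langle\Hess(H),\Hess(H)\rangle(p)$. Expanding $\dim J$, $\wt J$ and $\sq J$ then reduces to the lattice sums
\[ N(m)=\sum_{|a|<m}1,\quad S_1(m)=\tfrac1n\sum_{|a|<m}|a|,\quad S_2(m)=\sum_{|a|<m}a_1^2,\quad S_{11}(m)=\sum_{|a|<m}a_1a_2, \]
each a polynomial in $m$ whose leading (and, for $N$ and $S_1$, subleading) coefficients are elementary, e.g. $N(m)=\binom{m+n-1}{n}$. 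Using the symmetry of the simplex to write $\sum_{|a|<m}(\sum_i a_i\nu_i)^2$ through $S_2$ and $S_{11}$, together with $\sum_{i\neq j}\nu_i\nu_j=(\Delta H(p))^2-\langle\Hess(H),\Hess(H)\rangle(p)$, expresses $\sq J$ in terms of $H(p)$, $\Delta H(p)$ and $\langle\Hess(H),\Hess(H)\rangle(p)$. Substituting $m=k\epsilon^2$ and reading off the coefficients of the relevant powers of $k$ then yields the stated expansions of $a_i(\epsilon)$, $b_i(\epsilon)$ and $c_0(\epsilon)$; the norm formula $\|u\|_2^2=(c_0a_0-b_0^2)/a_0^2$ of Proposition \ref{donaldson-ag-dg} and the polarisation identity subsequently recover the inner products.

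The main obstacle is the squared-weight term $c_0(\epsilon)$, which is the genuinely new input (the $a$- and $b$-expansions follow Stoppa and Sz\'ekelyhidi). The difficulty is twofold: one must carry the second-moment sums $S_2$ and $S_{11}$ to leading order in $m$, and, crucially, separate the contribution of $\sum_i\nu_i^2=\langle\Hess(H),\Hess(H)\rangle(p)$ from that of $(\sum_i\nu_i)^2=(\Delta H(p))^2$, as these enter $Q(m)=\sum_{|a|<m}(\sum_i a_i\nu_i)^2$ with different combinatorial coefficients. Some care is also needed to verify that right-exactness, hence the equivariant splitting, persists uniformly as $k\to\infty$ along the ray $m=k\epsilon^2$ on which the jet order grows linearly in $k$; this is precisely where ampleness of $L-\epsilon^2 E$ for small $\epsilon$ is used.
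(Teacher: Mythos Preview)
Your setup via the jet exact sequence and the decomposition $\scO_X/\mfm_p^m=\bigoplus_{d<m}\mathrm{Sym}^d(T_p^*X)$ is exactly the paper's, and for $a_i(\epsilon)$ and $b_i(\epsilon)$ your argument is identical to the paper's (which in turn follows Stoppa and Sz\'ekelyhidi). The point of departure is the squared weight $c_0(\epsilon)$.

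You propose to compute $\sq J$ by the direct combinatorial route: expand $\sum_{|a|<m}(\sum_i a_i\nu_i)^2 = S_2(m)\sum_i\nu_i^2 + S_{11}(m)\sum_{i\neq j}\nu_i\nu_j$ and extract the leading terms of the simplex moments $S_2$ and $S_{11}$ (e.g.\ via the Dirichlet integrals $\int_{\Delta}x_1^2=\tfrac{2}{(n+2)!}$, $\int_{\Delta}x_1x_2=\tfrac{1}{(n+2)!}$). This is correct and gives the same answer as the paper, which instead takes a geometric detour: it observes that there is no closed formula for $\sq(S^jV)$ in terms of $\sq V$, so it interprets the leading coefficient of $\sum_{j<l}\sq(S^jV)$ as $\tfrac{1}{n+2}$ times the leading coefficient of $\sq H^0(\pr(V),\scO(j))$, which is the moment-map integral $\int_{\pr(V)}H_u^2\,\omega_{FS}^{n-1}$, evaluated using known Fubini--Study integrals. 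Your approach is more elementary and self-contained; the paper's approach trades the simplex moments for a standard projective-space computation. Both arrive at the coefficient $\tfrac{\sq V + (\wt V)^2}{(n+2)!}$ for the $l^{n+2}$ term of $\sq(\scO_{lp})$.

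One small correction: in your final paragraph you correctly separate $\sum_i\nu_i^2=\langle\Hess(H),\Hess(H)\rangle(p)$ from $(\sum_i\nu_i)^2=(\Delta H(p))^2$, so the $\epsilon^{2n+4}$ term you would obtain is $\tfrac{\langle\Hess(H),\Hess(H)\rangle(p)+(\Delta H(p))^2}{(n+2)!}$; the ``$H(p)^2$'' appearing in the displayed statement is a typo for $(\Delta H(p))^2$, as the paper's own derivation via $(\wt V)^2$ confirms.
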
 \noindent Note that $h(p)^2 = \sq L_p$, since $L_p$ is one-dimensional.

\begin{proof}

The expansions of $a_0(\epsilon), a_1(\epsilon), b_0(\epsilon)$ and $b_1(\epsilon)$ are due to  Sz\'ekelyhidi \cite[Lemma 28]{gabor-blowups1}, refining work of Stoppa \cite{stoppa}, and the starting point of our approach is the same as in their work. 

Denoting by $\I_{p}$ the ideal sheaf of the point $p$, the isomorphism $$H^0(\Bl_p X, k(L-\epsilon^2 E)) \cong H^0(X, kL\otimes \I_{k\epsilon^2 p})$$ allows us to reduce to understanding the action on $H^0(X, kL\otimes \I_{k\epsilon^2 p})$. Here, as above, we only consider $k,\epsilon^2$ such that $k\epsilon^2$ is an integer. The short exact sequence $$0 \to  kL\otimes \I_{k\epsilon^2 p} \to kL \to \scO_{k\epsilon^2 p} \otimes kL_p\to 0$$ induces for $k \gg 0$ a short exact sequence $$ 0 \to H^0(X, kL\otimes \I_{k\epsilon^2 p}) \to H^0(X,kL) \to \scO_{k\epsilon^2 p} \otimes kL_p \to 0,$$ where we think of the latter as a vector space and where, slightly abusively, $kL_p$ denotes the fibre of $L^{\otimes k}$ over $p$. By equivariance of the short exact sequence, it is enough to understand the action on the vector space $\scO_{k\epsilon^2 p} \otimes kL|_p$. 

Since the vector space $kL_p$ is one-dimensional, the weight on $\scO_{k\epsilon^2 p} \otimes kL_p$ is given by $$\wt(\scO_{k\epsilon^2 p} \otimes kL_p) = \wt(\scO_{k\epsilon^2 p}) + k\wt(L_p)\dim \scO_{k\epsilon^2 p} ,$$ while similarly the square weight is given by $$\sq(\scO_{k\epsilon^2 p} \otimes kL_p) = \sq(\scO_{k\epsilon^2 p}) + k^2\wt L_p^2\dim \scO_{k\epsilon^2 p} + 2k\wt L_p\wt(\scO_{k\epsilon^2 p}).$$
We next turn to the action on $\scO_{k\epsilon^2 p}$. Setting $k\epsilon^2 = l$, similarly to Sz\'ekelyhidi we think of $\scO_{lp}$ as the space of $(l-1)$-jets of functions at $p$, so that $$\scO_{lp} = \C \oplus T^*_pX \oplus \hdots \oplus S^{l-1}T^*_pX.$$ Denoting $V = T^*_pX$, the dimension and total weight satisfy $$\dim S^jV = {n+j -1 \choose j}, \qquad \wt S^j V ={n+j-1 \choose j-1} \wt V,$$ which gives \begin{align*}\dim \scO_{lp} &= \sum_{j=0}^{l-1}\dim S^jV  =  {n+l-1 \choose n} = \frac{1}{n!}\left(l^n + \frac{n(n-1)}{2} l^{n-1}\right) + O(l^{n-2}), \\  \wt \scO_{lp} &= {n+l-1 \choose n+1}\wt V = \frac{\wt V }{(n+1)!}\left(l^{n+1} + \frac{(n+1)(n-2)}{2} l^{n}\right) + O(l^{n-1}).\end{align*}  Summing these formulae reproduces the formulae for $ a_0(\epsilon), a_1(\epsilon), b_0(\epsilon), b_1(\epsilon)$. 

There seems to be no analogous formula for $\sq(\scO_{lp})$ in terms of $\sq V$, so we use a more geometric argument to calculate its leading order term in $l$. Consider the $(n-1)$-dimensional variety $\pr(V)$ with its induced $\C^*$-action. If we denote $$e(j) = \sq(H^0(\pr(V), \scO(j)) = e_0j^{n+1} + O(j),$$ then we know that $e(j)$ is a polynomial for all $j \gg 0$ with leading order term $$e_0 = \int_{\pr(V)} h_u^2 \omega_{FS}^{n-1}$$ where if we diagonalise so that the action is given by $(t^{\gamma_1},\hdots,t^{\gamma_n})$ the function $H_u$ is given by $$h_u = \frac{\sum_{j=1}^n \gamma_j|z_j|^2}{|z|^2}.$$ We can explicitly calculate this integral (see for example \cite[Proposition 3.1.1]{seto}), giving for $i \neq j$ $$\int_{\pr(V)} \frac{|z_i|^2|z_j|^2}{|z|^4} \omega_{FS}^{n-1} = \frac{1}{(n+1)!}, \qquad \int_{\pr(V)} \frac{|z_i|^4}{|z|^4} \omega_{FS}^{n-1} = \frac{2}{(n+1)!},$$ and so $$e_0 = \frac{1}{(n+1)!}(\sq V + (\wt V)^2).$$

As we are interested in the asymptotics of $\sq(\scO_{lp})$, we may assume that $e(j)$ is actually a polynomial for all $j$. Then we see that $$\sum_{j=0}^{l-1} e(j) = e_0\sum_{j=0}^{l-1}j^{n+1} + O(l^{n+1}) = \frac{e_0}{n+2}l^{n+2} + O(l^{n+1}),$$ so $$\sq(\scO_{k\epsilon^2 p}) =k^{n+2}\epsilon^{2n+4} \frac{(\sq V + (\wt V)^2)}{(n+2)!} + O(k^{n+1}).$$ It follows that \begin{align*}&\sq(\scO_{k\epsilon^2 p} \otimes kL_p) = \sq(\scO_{k\epsilon^2 p}) + k^2\wt L_p^2\dim \scO_{k\epsilon^2 p} + 2k\wt L_p\wt(\scO_{k\epsilon^2 p}), \\ &= k^{n+2}\left(\epsilon^{2n+4} \frac{(\sq V + (\wt V)^2)}{(n+2)!} +\frac{\wt L_p^2}{n!}\epsilon^{2n} + \frac{2\wt L_p\wt V}{(n+1)!} \epsilon^{2n+2} \right) + O(k^{n+1}).\end{align*} Finally this means that $$c_0(\epsilon) = c_0 - \epsilon^{2n} \frac{\wt L_p^2}{n!} - \epsilon^{2n+2}  \frac{2\wt L_p\wt V}{(n+1)!} -\epsilon^{2n+4} \frac{(\sq V + (\wt V)^2)}{(n+2)!}, $$ which using Lemma \ref{lem:dg-interp} proves the result.
 \end{proof}
 
 \begin{remark} \label{rmk:laplace}
 
The value $\Delta h(p)$ is simply the weight of the $\C^*$-action on the fibre of the line bundle $K_X$ over $p$, and hence has a natural interpretation in terms of geometric invariant theory. The reason is that if $\mu$ is a moment map with respect to $\omega$, then $\Delta \mu$ is a moment map with respect to the Ricci curvature $\Ric \omega \in c_1(-K_X)$ (in the sense of equivariant differential geometry), see for example \cite[Lemma 28]{gabor-blowups1} and \cite[Proposition 3.5]{JBM}.
 \end{remark}
 
In order to understand the inner product, suppose we have two commuting $\C^*$-actions on  $(X, L)$ fixing $p$ and hence inducing actions  $\lambda$ and $\gamma$ on  $(\Bl_p X, L-\epsilon E)$, and let their Hamiltonians be $h_{\lambda}$ and $h_{\gamma}$ with respect to $\omega$. The key invariant in defining the inner product is defined as follows.  Diagonalise the two one-parameter subgroups as $(t^{\lambda_1},\hdots,t^{\lambda_{N_{k,\epsilon}}})$ and  $(t^{\gamma_1},\hdots,t^{\gamma_{N_{k,\epsilon}}})$ respectively, then define $d_0$ by $$\sum_{j=1}^{N_{k,\epsilon}} \lambda_j \gamma_j = d_0(\epsilon)k^{n+2} + O(k^{n+1}).$$ The \emph{inner product} is then defined to be $$\langle \lambda, \gamma \rangle  = \frac{d_0a_0 - b_{0,\lambda}b_{0,\gamma}}{a_0^2};$$ this agrees with the (Futaki--Mabuchi, $L^2$) inner product of Hamiltonians normalised to integrate to zero. The following is an immediate consequence.

\begin{corollary}\label{cor:ip-expansion}  We have \begin{align*}d_0(\epsilon) = d_0 - \epsilon^{2n} \frac{ h_{\lambda}(p)h_{\gamma}(p)}{n!} -  \epsilon^{2n+2}  &\frac{h_{\lambda}(p)\Delta h_{\gamma}(p)+g_{\gamma}(p)\Delta h_{\lambda}(p)}{(n+1)!} \\  -\epsilon^{2n+4} &\frac{ \langle \Hess(h_{\lambda}), \Hess(h_{\gamma})\rangle(p) + h_{\lambda}(p)h_{\gamma}(p)}{(n+2)!},\end{align*} where $d_0$ denotes the corresponding term computed on $(X, L)$.\end{corollary}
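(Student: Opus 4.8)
The plan is to deduce this expansion directly from the expansion of the squared weight $c_0(\epsilon)$ in Proposition \ref{prop:algebraic-expansions} via the polarisation identity, exactly as anticipated by the remark preceding the statement. The key observation is that $d_0$ is the polarisation of the quadratic invariant $c_0$. Concretely, since $\lambda$ and $\gamma$ are \emph{commuting} $\C^*$-actions, they may be simultaneously diagonalised on each $H^0(\Bl_p X, k(L-\epsilon E))$, with weights $\lambda_j$ and $\gamma_j$; the sum $\lambda+\gamma$ is then again a $\C^*$-action, with weights $\lambda_j+\gamma_j$. Reading off the leading coefficient in $k^{n+2}$ in the elementary identity $(\lambda_j+\gamma_j)^2 = \lambda_j^2 + 2\lambda_j\gamma_j + \gamma_j^2$ gives
$$2d_0(\epsilon) = c_0^{\lambda+\gamma}(\epsilon) - c_0^{\lambda}(\epsilon) - c_0^{\gamma}(\epsilon),$$
where $c_0^{\lambda}(\epsilon)$ denotes the invariant $c_0(\epsilon)$ of Proposition \ref{prop:algebraic-expansions} computed for the action $\lambda$, and similarly for $\gamma$ and $\lambda+\gamma$.

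First I would substitute the expansion of Proposition \ref{prop:algebraic-expansions} for each of the three actions into this relation. Each $\epsilon$-dependent term of $c_0(\epsilon)$ is a quadratic form in the Hamiltonian $H$, and polarises as expected because the relevant maps are linear in $H$: the assignments $H\mapsto H(p)$ and $H\mapsto\Delta H(p)$ are linear, so $H(p)^2$ polarises to $H_\lambda(p)H_\gamma(p)$ and $2H(p)\Delta H(p)$ polarises to $H_\lambda(p)\Delta H_\gamma(p) + H_\gamma(p)\Delta H_\lambda(p)$; likewise $H\mapsto\Hess(H)(p)$ is linear, so the symmetric form $\langle\Hess(H),\Hess(H)\rangle(p)$ polarises to $\langle\Hess(H_\lambda),\Hess(H_\gamma)\rangle(p)$. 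Since the powers of $\epsilon$ are identical in all three expansions, carrying out this substitution term by term reproduces precisely the claimed formula, with $d_0$ the polarisation of $c_0$ on $(X,L)$.

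There is no substantial obstacle here; the corollary really is an immediate consequence of Proposition \ref{prop:algebraic-expansions}. The only points meriting care are that the very definition of $d_0(\epsilon)$ through $\sum_j\lambda_j\gamma_j$ presupposes a simultaneous diagonalisation, which is available exactly because the two actions commute, and that Hamiltonians are additive under the sum of commuting actions, so that $H_{\lambda+\gamma} = H_\lambda + H_\gamma$ after the normalisation to integrate to zero; this is what makes the polarisation at the level of weights agree with the polarisation at the level of Hamiltonians. Granting these bookkeeping remarks, the result follows.
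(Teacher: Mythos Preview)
Your proposal is correct and matches the paper's own approach: the paper states the corollary as ``an immediate consequence'' of Proposition \ref{prop:algebraic-expansions}, having already noted that the polarisation identity $2\langle u,v\rangle = \|u+v\|^2 - \|u\|^2 - \|v\|^2$ reduces the inner product computation to the norm computation. Your write-up simply makes this explicit.
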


We next show that one can reduce to the projective, rational case.

\begin{proposition} The formulae of Corollaries \ref{futaki-expansion} and \ref{cor:ip-expansion} hold for arbitrary compact K\"ahler manifolds.

\end{proposition}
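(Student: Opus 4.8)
The plan is to deduce the general statement from the projective, rational case already established in Proposition \ref{prop:algebraic-expansions}, Corollary \ref{futaki-expansion} and Corollary \ref{cor:ip-expansion}, by two successive approximation arguments: first removing the rationality hypotheses on the vector field and on $\epsilon$, and then removing projectivity. The starting observation is that every quantity appearing on either side of the two expansions is intrinsically differential-geometric and depends \emph{polynomially} on the vector field $u$ as it ranges over the real Lie algebra $\mft$ of a torus acting on $(X,\alpha)$. On the left, the Futaki invariant $\Fut(u)$ is linear in $u$ and the Futaki--Mabuchi inner product is bilinear, via the integral formulae of Section \ref{section2}, which are defined on any compact K\"ahler manifold; on the right, $H(p)$ and $\Delta H(p)$ are linear in $u$ while $\langle \Hess(H),\Hess(H)\rangle(p)$ is quadratic. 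Moreover, via Proposition \ref{donaldson-ag-dg} these quantities are rational functions of $\epsilon$, analytic wherever $a_0(\epsilon)\neq 0$.

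First I would fix a projective manifold $X$ with $\alpha=c_1(L)$. By Proposition \ref{donaldson-ag-dg} the differential-geometric Futaki invariant and norm coincide there with the algebraic invariants built from $a_0(\epsilon),b_0(\epsilon),c_0(\epsilon),d_0(\epsilon)$, so Proposition \ref{prop:algebraic-expansions} together with Corollaries \ref{futaki-expansion} and \ref{cor:ip-expansion} establishes the expansions whenever $u$ generates a genuine $\C^*$-action and $k\epsilon^2\in\Z$. Since such rational $u$ are dense in $\mft$ and such $\epsilon$ are dense in any small interval, and since both sides are polynomial in $u$ and analytic in $\epsilon$, the expansions extend to all $u\in\mft$ and all sufficiently small $\epsilon$ on any projective $X$; the inner product case follows from the norm case via the polarisation identity $2\langle u,v\rangle = \|u+v\|^2-\|u\|^2-\|v\|^2$.

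The main obstacle is the passage to an \emph{arbitrary} compact K\"ahler manifold, since a general K\"ahler manifold is neither projective nor even a deformation of a projective one, so no global approximation in the manifold is available. The resolution is that the entire $\epsilon$-dependence of the expansions is \emph{local} near the blown-up point $p$: the $\epsilon^0$ terms $a_0,b_0,c_0,d_0$ are by definition the invariants of $(X,\alpha)$ itself, which make sense on any K\"ahler manifold, and the higher-order terms arise only from the region where the geometry of the blowup differs from that of $X$. Concretely, the glued metric $\omega_\epsilon$ agrees with $\omega$, and the lift $H_{u,\epsilon}$ agrees with $H_u$, outside a ball about $p$ whose radius shrinks with $\epsilon$. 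Hence the differences $\Fut_\epsilon(u)-\Fut(u)$ and $\|u\|_\epsilon^2-\|u\|^2$ are integrals supported in that shrinking ball, which one evaluates to leading order against the Burns--Simanca model on $\Bl_0\C^n$ together with the Taylor expansion of $H_u$ at $p$. The resulting corrections are therefore universal constants, depending only on $n$, multiplying the local invariants $H(p)$, $\Delta H(p)$ and $\langle \Hess(H),\Hess(H)\rangle(p)$.

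It then remains only to identify these universal constants, and here I would appeal to the projective case rather than recompute the model integrals. Since the correction terms depend on $(X,\omega,u)$ only through the finite jet of $H_u$ at $p$, the local contribution computed by the Burns--Simanca gluing is literally the same quantity that the jet-and-ideal-sheaf computation of Proposition \ref{prop:algebraic-expansions} evaluates in the projective setting; the two calculations proceed through the identical local model, so they must produce the same universal coefficients $\tfrac{1}{n!},\tfrac{1}{(n+1)!},\tfrac{1}{(n+2)!}$, and so on. Matching against projective examples (for instance products of projective spaces with diagonal torus actions, where $H(p)$, $\Delta H(p)$ and $\langle \Hess(H),\Hess(H)\rangle(p)$ take a full range of values as the weights vary) pins these coefficients down unambiguously. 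Combining this identification of the local corrections with the intrinsic $\epsilon^0$ terms yields the full expansions of Corollaries \ref{futaki-expansion} and \ref{cor:ip-expansion} for arbitrary holomorphic vector fields on an arbitrary compact K\"ahler manifold, completing the reduction.
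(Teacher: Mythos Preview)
Your proposal is correct and follows essentially the same strategy as the paper: both arguments rest on the observation that the $\epsilon$-dependent corrections are supported in the region where the Burns--Simanca model is glued in, so the local computation is literally identical in the projective and general K\"ahler settings, and then continuity (or, as you phrase it, polynomiality) in $u$ handles irrational vector fields. The paper's version is terser---it simply notes that the same glued metric is used near the exceptional divisor in both settings and invokes continuity in $u$---whereas you spell out the polynomial structure in $u$, the analyticity in $\epsilon$, and add a redundant verification step of matching coefficients against projective examples; this last step is unnecessary once you have observed that the local model is the same, but it does no harm.
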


\begin{proof}

The idea we use of reducing to the projective case is due to Sz\'ekelyhidi \cite[Proposition 35]{gabor-blowups2}. Indeed, when viewing these invariants as integrals rather than algebro-geometric invariants through Proposition \ref{donaldson-ag-dg}, all calculations are local around the exceptional divisor, and our definition of the metric $\omega_{\epsilon}$ is that it is a glued-in copy of the Burns-Simanca metric on $\Bl_0 \C^n$. Since we use the same metric around in the exceptional divisor in both the projective and non-projective settings, the formulae in the projective case imply those in the general case.  \end{proof}

This allows the calculation of the Futaki invariant on blowups, which is straightforward from Proposition \ref{prop:algebraic-expansions}.
 
\begin{corollary}\cite[Corollary 29]{gabor-blowups1}\label{futaki-expansion} The Futaki invariant is a quotient of polynomials in $\epsilon$ which has the following expansion.

\begin{enumerate}
\item In general, we have$$F_{\epsilon}(u) = F(u) - \frac{\epsilon^{2n-2}}{2(n-2)!}h(p) - \frac{\epsilon^n}{n!}\left(\frac{2n-4}{2}\Delta h(p) - \frac{a_1}{a_0}h(p) \right) + O(\epsilon^{n+1}).$$
\item Suppose in addition $n=2$ and $a_1 \neq 0$. Then $$F_{\epsilon}(u) = F(u) - \frac{\epsilon^2}{2}h(p) + \frac{\epsilon^4a_1}{2a_0}h(p) + \frac{\epsilon^6}{2a_0}\left(\frac{a_1}{3}\Delta h(p) - \frac{h(p)}{2}\right) + O(\epsilon^8).$$
\item Suppose  $n=2$ and $a_1 = 0$. Then $$F_{\epsilon}(u) = F(u) - \frac{\epsilon^2}{2}h(p) - \frac{\epsilon^6}{4a_0}h(p)+ \frac{\epsilon^8}{12a_0} \Delta h(p) + O(\epsilon^{10}).$$
\end{enumerate}Moreover, if $h(p) = \Delta h(p)=0$, then $F_{\epsilon}(u)$ vanishes identically.
\end{corollary}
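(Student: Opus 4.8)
The plan is to compute $F_\epsilon(u)$ by substituting the expansions of Proposition \ref{prop:algebraic-expansions} directly into Donaldson's formula of Proposition \ref{donaldson-ag-dg}. By the preceding reduction it suffices to work in the projective case with $\epsilon$ rational, where the $\C^*$-action on $(\Bl_p X, L - \epsilon E)$ produces numerical invariants $a_0(\epsilon), a_1(\epsilon), b_0(\epsilon), b_1(\epsilon)$ and
\[ F_\epsilon(u) = 4\,\frac{b_0(\epsilon)a_1(\epsilon) - b_1(\epsilon)a_0(\epsilon)}{a_0(\epsilon)}. \]
Since each of these four functions is, by Proposition \ref{prop:algebraic-expansions}, an explicit polynomial in $\epsilon$ whose coefficients are the invariants of $(X,L)$ together with $H(p)$ and $\Delta H(p)$, this expression is manifestly a quotient of polynomials in $\epsilon$, which is the first assertion of the corollary.

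To extract the stated expansion I would expand $a_0(\epsilon)^{-1} = a_0^{-1}\bigl(1 + O(\epsilon^{2n})\bigr)$ as a geometric series, multiply out the numerator, and collect by powers of $\epsilon$. The lowest-order corrections come from the leading $\epsilon$-terms of $a_1(\epsilon)$ and $b_1(\epsilon)$, which carry $H(p)$, and of $b_0(\epsilon)$, which carries $\Delta H(p)$, together with the cross-contribution of the $\epsilon$-correction of the denominator against $F(u)$. Assembling these and invoking Lemma \ref{lem:dg-interp} to read the weights $\wt L_p$ and $\wt T^*_pX$ as $-H(p)$ and $-\Delta H(p)$ yields part (i).

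The case $n=2$ requires separate care and is where the genuine bookkeeping lies: the exponents governing the various corrections collide, so terms of strictly different order when $n>2$ now contribute simultaneously, and products of two $\epsilon$-corrections, negligible in higher dimensions, must be retained. Whether the subleading coefficient $a_1$ vanishes then changes which contribution dominates beyond the first correction, producing the two sub-cases (ii) and (iii); I would simply track the $\epsilon$-series of numerator and denominator to the displayed orders, treating $a_1 \neq 0$ and $a_1 = 0$ separately. I expect this low-dimensional bookkeeping, rather than any conceptual difficulty, to be the main obstacle, together with keeping the normalisation of the Hamiltonian consistent with the algebraic weights of Lemma \ref{lem:dg-interp}.

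For the concluding clause the key observation is that, by Lemma \ref{lem:dg-interp} and Proposition \ref{prop:algebraic-expansions}, the only $\epsilon$-dependence of the weight coefficients $b_0(\epsilon)$ and $b_1(\epsilon)$ is carried by $H(p)$ and $\Delta H(p)$; every remaining $\epsilon$-correction lives in $a_0(\epsilon)$ and $a_1(\epsilon)$ and is purely numerical. Hence if $H(p) = \Delta H(p) = 0$ then $b_0(\epsilon) = b_0$ and $b_1(\epsilon) = b_1$ are independent of $\epsilon$. Normalising the Hamiltonian to have average zero, which is legitimate since $F(u)$ is normalisation-independent and which forces the leading weight term $b_0$ to vanish, the numerator collapses to $-b_1 a_0(\epsilon)$ and the factors of $a_0(\epsilon)$ cancel, so $F_\epsilon(u) = -4b_1 = F(u)$ is constant in $\epsilon$. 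Thus all $\epsilon$-dependent corrections vanish identically, which is the final assertion.
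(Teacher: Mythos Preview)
Your proposal is correct and matches the paper's approach: the paper itself offers no proof beyond the sentence ``straightforward from Proposition \ref{prop:algebraic-expansions}'' and a citation to \cite{gabor-blowups1}, and your outline is precisely that computation fleshed out. One point of care in your final paragraph: frame the average-zero normalisation of $H$ as the standing convention (so that $b_0=0$ from the outset) rather than as a free choice made after assuming $H(p)=0$, since renormalising would shift $H(p)$ and could break the hypothesis; with that convention fixed, your cancellation $F_\epsilon(u)=-4b_1=F(u)$ goes through exactly as you say.
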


Note that while the expansion in $\epsilon$ is not actually finite, this is only caused by the fact that the Futaki invariant is given by a \emph{quotient} of polynomials in $\epsilon$. 

One can similarly expand the inner product $\langle \cdot, \cdot\rangle_{\epsilon}$ through Corollary \ref{cor:ip-expansion} and Proposition \ref{prop:algebraic-expansions}, though it does not seem illuminating to explicitly write the resulting inner product. What is important is that, returning to our notation that $(X_0,\alpha_0)$ is the central fibre on which $p$ degenerates to a fixed point $p_0$ of the $T^{\C}$-action on $X_0$, we have a sequence of inner products $\langle \cdot, \cdot\rangle_{\epsilon}$ on the Lie algebra $\mft$. We emphasise that while the $\langle \cdot, \cdot\rangle_{\epsilon}$ are \emph{not} defined purely in terms of invariants of the vector field and Hamiltonian at the fixed point $p_0$, the manner in which  $\langle \cdot, \cdot \rangle_{\epsilon}$ differs from $\langle \cdot, \cdot\rangle_{0}$ \emph{is} purely through invariants of the vector field and Hamiltonian at the fixed point $p_0$. The expansion of $\langle \cdot, \cdot\rangle_{\epsilon}$ in $\epsilon$ is again not finite, but this is simply because it is also a quotient of polynomials in $\epsilon$.

These results allow us to explicitly calculate the extremal vector field $\xi_{\epsilon}$ and hence the term  $$\int_{\Bl_qX}\langle \mu_{\epsilon} , w \rangle \langle \mu_{\epsilon}, \xi_{\epsilon}\rangle  \omega_{\epsilon,q}^n$$ appearing in the relative Donaldson--Futaki invariant, where this is calculated on (the blowup of) the specialisation $q$ of $p$ under a $\C^*$-action. We can define then an inner product $\langle \cdot ,\cdot  \rangle_{\epsilon,q}$ that depends on \emph{both} $\epsilon$ and $q$, which is the Futaki--Mabuchi inner product on $(\Bl_qX,\alpha_{\epsilon})$, and can modify $w$ to make it orthogonal to $\xi_{\epsilon}$ using this inner-product. One similarly checks that one can actually ensure $u$ is orthogonal to the entire Lie algebra $\mft$, matching the statement of the introduction, without changing the relative Donaldson--Futaki invariant. Indeed, one may ensure $u$ is orthogonal by adding an element $u'\in\mft'$ to $u$; this leaves the space underlying the test configuration unchanged as $u\in\mfk^T$, and so the claim follows by checking that the resulting numerical invariants are unchanged.

Summarising, what we have proven is the following:

\begin{corollary}\label{k-stabhence} There is an $\epsilon_0$ such that if the blowup $(\Bl_pX, \alpha_{\epsilon})$ is relatively K-stable for all $0<\epsilon<\epsilon_0$, then for all $u$ orthogonal to $\mft$ under $\langle \cdot ,\cdot  \rangle_{\epsilon,q}$ we have $$A_{\epsilon}h(q_u) + B_{\epsilon}\Delta h(q_u)>0$$ for $\epsilon$ sufficiently small, where $A_{\epsilon}>0$ and $B_{\epsilon}$ depend only on $\epsilon$ and topological invariants of $(X,\alpha)$, and these quantities are all calculated on $\X_0$. Here $q_u$ is the specialisation of $p$ under the flow of $u$.
\end{corollary}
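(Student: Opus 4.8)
The plan is to run a Hilbert--Mumford-type argument: feed relative K-stability of the blowup into the one-parameter families it controls, namely the $\R$-test configurations generated by elements $u\in\mft^{\perp}_{\epsilon}$, and then read off the asserted inequality from the algebraic expansions of Section \ref{section6}. The functions $A_{\epsilon},B_{\epsilon}$ will be \emph{defined} as the coefficients of $H(q_u)$ and $\Delta H(q_u)$ that emerge once the Futaki invariant of the blowup is expanded, so part of the task is simply to exhibit that the expansion has this shape with $A_{\epsilon}>0$ and $B_{\epsilon}$ strictly subleading.

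Concretely, first I would fix $0\neq u\in\mft^{\perp}_{\epsilon}$ and restrict the family $\Bl_Z\X\to B$ to the closure of the orbit of the subgroup generated by $u$. By the $\R$-test configuration structure on $(\Bl_Z\X,\scA_{\epsilon})$ established above, this is a $T''$-equivariant $\R$-test configuration for $(\Bl_p X,\alpha_{\epsilon})$ with smooth central fibre $\Bl_{q_u}\X_0$, and it is not a product since $u\notin\mft''$; hence relative K-stability gives $\DF_{T''}(\Bl_Z\X,\scA_{\epsilon})>0$. The next step is to simplify this relative invariant. It equals $F_{\epsilon}(u)-\sum_j \langle u,v_j\rangle_{\epsilon}\langle v_j,v_j\rangle_{\epsilon}^{-1}\Fut_{\epsilon}(v_j)$ for a basis $\{v_j\}$ of $\mft''$, and because $u$ is $\langle\cdot,\cdot\rangle_{\epsilon}$-orthogonal to $\mft''$ every inner product $\langle u,v_j\rangle_{\epsilon}$ vanishes, so each correction term drops out and $\DF_{T''}=F_{\epsilon}(u)$. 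Thus the hypothesis is exactly $F_{\epsilon}(u)>0$ for all $0\neq u\in\mft^{\perp}_{\epsilon}$ and all small $\epsilon$. I would then expand $F_{\epsilon}(u)$ using Corollary \ref{futaki-expansion}, applied with the base point the fixed point $q_u$ of $u$; by Lemma \ref{lem:dg-interp} and Remark \ref{rmk:laplace} the correction terms are governed precisely by the weights of $L$ and $K_X$ at $q_u$, equivalently by $H(q_u)$ and $\Delta H(q_u)$, with coefficients that by Proposition \ref{prop:algebraic-expansions} depend only on $\epsilon$ and on $a_0,a_1$. Collecting these coefficients defines $A_{\epsilon}$ and $B_{\epsilon}$, and one reads off $A_{\epsilon}>0$ with $B_{\epsilon}$ of strictly higher order in $\epsilon$.

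The main obstacle is the zeroth-order term $F(u)$, the Futaki invariant of $u$ on the base $\X_0$, which must be shown not to interfere with the leading blowup correction $A_{\epsilon}H(q_u)$. This is where extremality of $\X_0$ is decisive: since $\X_0$ is extremal one has $F(u)=\langle u,\chi\rangle_0$ for the extremal vector field $\chi$, and the point is that, for $u$ in the $\langle\cdot,\cdot\rangle_{\epsilon}$-orthogonal complement of the stabiliser, this pairing is of strictly higher order in $\epsilon$ than $A_{\epsilon}H(q_u)$. Making this precise requires the comparison of $\langle\cdot,\cdot\rangle_{\epsilon}$ with $\langle\cdot,\cdot\rangle_0$ from Corollary \ref{cor:ip-expansion}: the difference of the two inner products is itself of high order in $\epsilon$ and is controlled purely by the Hamiltonian data at the fixed point, so that the $\langle\cdot,\cdot\rangle_{\epsilon}$-orthogonality of $u$ to the stabiliser direction forces $F(u)$ below the leading order. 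Once the signs and orders are pinned down in this way, the positivity $F_{\epsilon}(u)>0$ for all small $\epsilon$ forces the leading term $A_{\epsilon}H(q_u)+B_{\epsilon}\Delta H(q_u)$ to be positive, which is the claim. The delicate part throughout is the bookkeeping of orders, since $H(q_u)$ itself appears at several powers of $\epsilon$ in the expansion, and one must verify that the extremal contribution and the genuinely subleading terms never overturn the sign dictated by $A_{\epsilon}H(q_u)$.
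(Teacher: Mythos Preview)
Your overall strategy is exactly the paper's: the corollary is stated there as a summary (``Summarising, what we have proven is the following''), and the content is precisely the chain you describe---restrict to the $\R$-test configuration generated by $u\in\mft^{\perp}_{\epsilon}$, use that $\langle\cdot,\cdot\rangle_{\epsilon}$-orthogonality to the stabiliser kills the correction terms so that the relative Donaldson--Futaki invariant equals $F_{\epsilon}(u)$, and then read off the coefficients $A_{\epsilon},B_{\epsilon}$ from the expansion of Corollary \ref{futaki-expansion}. That part is correct and matches the paper.

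There is, however, a genuine gap in your treatment of the zeroth-order term $F(u)$. You assert that for $u\in\mft^{\perp}_{\epsilon}$ the pairing $\langle u,\chi\rangle_0$ with the extremal vector field $\chi$ is of strictly higher order in $\epsilon$ than $A_{\epsilon}H(q_u)$. This is not automatic: it holds only if $\chi$ lies in the stabiliser $\mft_p$, and nothing in the setup guarantees that the extremal vector field of $\X_0$ fixes the specialisation point. If $\chi\notin\mft_p$, then there is some $u_0\in\mft^{\perp}_0$ with $\langle u_0,\chi\rangle_0\neq 0$, and for the nearby $u\in\mft^{\perp}_{\epsilon}$ the term $F(u)$ is of order one, dominating the $O(\epsilon^{2n-2})$ blowup correction.

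The fix is to observe that relative K-stability of the blowup itself forces $\chi\in\mft_p$. Indeed, if $\langle u_0,\chi\rangle_0<0$ for some $u_0\in\mft^{\perp}_0$, then for small $\epsilon$ the expansion gives $F_{\epsilon}(u)<0$, contradicting stability; applying this to both $u_0$ and $-u_0$ (which both lie in the subspace $\mft^{\perp}_0$) yields $\langle u_0,\chi\rangle_0=0$ for all such $u_0$, i.e.\ $\chi\in\mft_p$. Once this is established, your inner-product comparison via Corollary \ref{cor:ip-expansion} does show $F(u)=O(\epsilon^{2n})$ for $u\in\mft^{\perp}_{\epsilon}$, which is indeed subleading to $A_{\epsilon}\sim\epsilon^{2n-2}$, and the rest of your argument goes through.
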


Here $A_{\epsilon}, B_{\epsilon}$  can be computed explicitly from Corollary \ref{futaki-expansion}, with $B_{\epsilon}$ of strictly higher order in $\epsilon$, while the inner product $\langle \cdot, \cdot\rangle_{\epsilon}$ has a similarly explicit interpretation from Corollary \ref{cor:ip-expansion}. Similarly, the numerical criteria produced from GIT can be understood through the existence of zeroes of finite-dimensional moment maps, which is the way in which the original Arezzo--Pacard results were phrased \cite{arezzo-pacard1,arezzo-pacard2}. If $X$ is projective and $\alpha = c_1(L)$ for $L$ ample, the criteria are further phrased in terms of completely classical GIT stability.

\section{The semistable case}\label{sec:semistable}
\subsection{The geometric setup}

We next consider the strictly semistable case, for which we need to assume projectivity: we assume $\alpha = c_1(L)$ for an ample line bundle $L$, so that $X$ is a smooth projective variety. The projectivity argument will be required to appeal to certain results requiring a form of compactness, see Remark \ref{rmk:proj}. Denote by $\xi$ the extremal vector field of $(X,L)$.

\begin{definition} We say that $(X,L)$ is \emph{relatively K-semistable} if there exists a $\xi$-invariant test configuration for $(X,L)$ with central fibre $(X_0,L_0)$, with $c_1(L_0)$ admiting an extremal metric with extremal vector field $\xi$. \end{definition}

The definition implies relative K-semistability in the usual sense. We remark that one may make a more general definition than this, allowing for families whose base is not necessarily $\C$, but it can be seen through a Luna slice argument that the resulting definition is equivalent. 

We will employ the $\xi$-invariant  Kuranishi space $B$ of $(X_0,L_0)$, which is a complex space $B$ with a universal family $\pi: (\Y,\L) \to B$ (which is a holomorphic submersion with $\L$ relatively ample) such that a maximal compact group $K \subset \Aut_{\xi}(X_0,L_0)$ acts $\pi$-equivariantly on $\Y$ and $B$, such that the fibre over $0\in B$ is $(X_0,L_0)$ and by versality of the Kuranishi space, there exists a sequence of points $t_j$ tending to $0$ such that the associated fibres of $\pi$ are isomorphic to $(X,L)$. By construction, $B$ is a complex subspace of a vector space, with the vector space admitting a linear $K$-action. We refer to Sz\'ekelyhidi \cite[Proposition 7]{gabor-deformations} and Inoue \cite[Proposition 3.7]{inoue-moduli} for further details. We will further assume the $\xi$-invariant deformation theory of $(X_0,L_0)$ is unobstructed, which means that the $\xi$-invariant Kuranishi space $B$ is smooth, in order to perform our analytic arguments (this is automatic if the non-equivariant deformation theory of $(X_0,L_0)$ is unobstructed, for example).

\begin{remark}\label{rmk:stabiliser-failure} For a point $b\in B$ with fibre $(\X_b,\L_b)$, it may not be the case that $K_b$ is a maximal compact subgroup of $\Aut(\X_b,\L_b)$, and so the inclusion $K_b^{\C} \subset \Aut(\X_b,\L_b)$ may be strict. As a hypothesis, we will assume this is so.
\end{remark}

Consider the fibre product $\Y \times_B \Y \to B$ (which is a complex manifold by smoothness of $B$), and the diagonal $\Delta \subset \Y \times_B \Y.$ Thus as a set $$\Y \times_B \Y = \{(y_1,y_2) \in \Y \times \Y: \pi(y_1)=\pi(y_2)\},$$ while $$\Delta = \{(y,y) \in \Y \times \Y\} \cong \Y.$$ The blowup $$\X = \Bl_{\Delta} (\Y\times_B\Y)$$ can be thought of as a universal blowup of fibres of $\pi$ at points; we obtain a proper holomorphic submersion $\pi: \X \to \Y$ such that the fibre over $p \in \Y$ is isomorphic to $\Bl_p\Y_{\pi(p)}$, where $\Y_{\pi(p)}$ is the fibre of $\pi$ over $p$. In addition we obtain a sequence of relatively ample line bundles $\L_{\epsilon} = \pi^*\L - \epsilon \E$, where $\E$ is the exceptional divisor of the blowup. The holomorphic submersion $(\X,\L_{\epsilon}) \to \Y$ is then a proper holomorphic submersion whose geometry we will study in detail. 

Much as in the extremal case of Section \ref{sec4}, we will be interested in blowing up a point $p\in \Y$ and will fix a maximal torus $T\subset K_p$ of the stabiliser of $p$ under the $K$-action, and as in the prior section we may assume that $\xi \in \Lie T$, as otherwise the blowup cannot admit an extremal metric, by a calculation similar to Sz\'ekelyhidi \cite[Proposition 40]{gabor-blowups2}. We emphasise that in general $T$ may not be a maximal torus of $\Aut_0(X,L)_p$ (hence of $\Aut_0(\Bl_p,L_{\epsilon})$); as a hypothesis, we will assume throughout that the inclusion $$T \subset \Aut_0(X,L)_p$$ induces a maximal torus. This is automatic, for example, when $\Aut_0(X,L)_p$ (or even $\Aut_0(X,L)$), is actually discrete, which will be the case in our examples. 

We next include metric information. We will fix an initial closed $(1,1)$-form $\Omega \in c_1(\L)$ such that the associated Weil--Petersson form $\Omega_B$ on $B$ (defined by the fibre integral as in Section \ref{moment-map-geometry}) is K\"ahler; this is provided by the Kuranishi theory and work of Ortu described below. We will construct then a sequence $\Omega_{\epsilon} \in c_1(\L_{\epsilon})$ of relative K\"ahler metrics over $\Y$, endowing $\Y$ with a sequence of forms $\Omega_{\Y,\epsilon}$, defined as the Weil--Petersson forms for the submersion $\X\to \Y$ associated to $\Omega_{\epsilon}$. These sequences are built from  the initial $\Omega \in c_1(\L)$, generalising the gluing arguments of Section \ref{sec:analysis} to a family of manifolds over $B$; the construction will also involve perturbing the initial metric $\Omega$ before gluing.

\subsection{The gluing argument}\label{sec:semistable-gluing}

The goal of this section is to construct the K\"ahler metrics $\Omega_{\epsilon}$. The perturbation for $\epsilon=0$ is Ortu \cite[Theorem 2.10]{ortu}, and we begin by describing this. 

We can consider the restriction to $\Y \times_B \Y$ of the projections to the two factors in $\Y \times \Y$, which then are holomorphic maps since $\Y \times_B \Y \subset \Y \times \Y$ is a complex submanifold. The projection to the first factor $\Y$ means that $\Y \times_B \Y \to B$ factors through a map
$$
\Y \times_B \Y \to \Y \to B,
$$
and it is $\Y \times_B \Y \to \Y$ that we consider when $\epsilon = 0$. We will construct a relatively K\"ahler metric on this fibration by pulling back from the projection to the second factor. Unlike the stable case this pullback is not purely vertical as we take the fibre product over $B$ instead of the product. Note also that when we pull back a relatively K\"ahler metric on $\Y \to B$ to $\Y \times_B \Y$ via the projection to the second factor, we obtain a relatively K\"ahler metric for the fibration $\Y \times_B \Y \to \Y$ where the map is the projection to the first factor.  

The form we pull back from the projection to the second factor is the one given by the work of Ortu. By Kuranishi theory, we may consider  $\omega$ as a relatively K\"ahler metric on $\Y \to B$ (as the construction of the Kuranishi space produces $\Y$ as a smooth product of $B$ with the smooth manifold underlying $X_0$). For a function $\phi_b$ on the fibre $\Y_b$ over $b$, denote 
$$
\bar \mfk^T_{\phi_b} = \left\{ h_{u} + \frac{1}{2} \langle u, \nabla \phi_b \rangle : u \in \mfk^T \right\},
$$
where gradient is taken on the fibre $\Y_b$. Ortu \cite[Theorem 2.10]{ortu} proves that, possibly after shrinking $B$, there exists $\phi : \Y \to \mathbb{R}$ such that for all $b \in B$, the restriction $\phi_b$ of $\phi$ is K\"ahler on $\Y_b$, and 
\begin{equation}\label{eq:ortu}
S(\omega + \ddb \phi_b, \Y_b) \in \bar \mfk^T_{\phi_b}.
\end{equation}
We then set $\Omega$ to be the relatively K\"ahler metric on $\Y$, and note that by Kuranishi theory, perhaps after shrinking $B$ once more, the associated Weil--Petersson metric on $B$ is K\"ahler. We then pull back this $\Omega$ to  $\Y \times_B \Y \to \Y$, via the projection to the second factor, producing a relatively K\"ahler metric on $\Y \times_B \Y \to \Y$. We will next modify this form to a sequence $\Omega_{\epsilon}$ on $\X$ by a gluing argument, in such a way that $\Omega_0 = \Omega$, by proving an analogue of Theorem \ref{thm:fibrewisesoln} for this family $\X\to\Y$.

We follow the steps in the extremal case. First, let $d$ be the distance function on $\Y \times_B \Y$ to the diagonal $\Delta$ computed on the fibre. Define the first approximate solution as 
$$
\Omega^1_{\epsilon} = \sigma^* \Omega_{0} + \epsilon^2 \ddb\left(\gamma_2 \cdot (\gamma(\epsilon^{-1}d)\log (\epsilon^{-2}d^2) + f(\epsilon^{-2}d^2))\right),
$$
where $\sigma : \X \to \Y \times_B \Y$ denotes the blowdown map. This may not be relatively K\"ahler over all of $\Y$, but we can certainly guarantee this for all sufficiently small $\epsilon$ after shrinking $B$. All holomorphic vector fields on $\Y$ lift to $\X$ as the diagonal in $\Y \times_B \Y$ is preserved by the product $K^T$-action, and we use the notation $h_{\epsilon}$ for the lifted potential for the vector field induced by $h \in \mfk$ with respect to $\Omega^1_{\epsilon}$. Note that as we changed the relative K\"ahler metric $\omega$ to $\Omega_0$ before blowing up, the holomorphy potentials also changed, and so the expansion of the potential $h_{\epsilon}$ associated to some $u \in \mfk^T$ is  now
$$
h_{\epsilon} = h + \frac{1}{2} u (\phi) + \gamma_2 \cdot O(1)
$$
which is $O(|b|)+\gamma_2 \cdot O(1)$, since $\phi$ is $O(|b|)$. We also define weighted spaces on the fibres of $\X \to \Y$ analogously to how they were defined on the central fibre in Section \ref{sec:wtdspaces}---this used holomorphic normal coordinates about the blown up point, but it is equivalent to use the function $d$ to define these. It is then clear that the definition goes over also outside the central fibre in our current setup.

After possibly shrinking $\Y$ again, we can then ensure the analogue of Proposition \ref{prop:linearisationinverse} holds on every fibre of $\X^T \to \Y^T$. For $y \in \Y^T$, let $\xi_y \in \mfk^T$ be the vector field which $S_V(\Omega_0)$ corresponds to on the image of $y$ in $B$ via \eqref{eq:ortu}. Note that $\xi_y$ only depends on the image of $y$ in $B$.
\begin{proposition}
\label{prop:linearisationinversesemistable}
Let $n>2$ and let $\delta \in (4-2n, 0).$ Then, after possibly shrinking $Y$, we have that for all $y \in \Y^T$, the operator 
$$
P : C^{4,\alpha}_{\delta} (\X^T_y) \times \bar \mfk^T \to C_{\delta-4}^{0,\alpha} (\X^T_y)
$$
given by
$$
(f,h) \mapsto L_{\omega_{\epsilon,y}}(f) - \frac{1}{2} \langle \xi_y, \nabla f \rangle - h_{\epsilon} 
$$
where $\omega_{\epsilon,y}$ is the restricton of $\Omega^1_{\epsilon}$ to $\X_b$, admits a right-inverse $Q$ with operator norm bounded independently of $\epsilon$. Moreover, $Q$ depends smoothly on $y$.

In the case when $n=2$, the same holds for $\delta \in (-1,0)$ for all $|\delta|$ sufficiently small, but with $\| Q \| \leq C \epsilon^{\delta}$.
\end{proposition}
 The key is that since we can find a right-inverse on the central fibre, we can find right-inverses for all fibres near the central fibre, with a uniform bound for their operator norms. This is part of Ortu's proof establishing Equation \eqref{eq:ortu}, see \cite[Equation 2.6]{ortu}. Therefore we uniformly obtain right-inverses with the correct properties before blowing up. The proof then proceeds exactly as the proof of Proposition \ref{prop:linearisationinverse}. In the case $n=2$, the proof goes by establishing a uniform bound (with no $\epsilon$ dependence) on the subspace functions of average $0$ first -- we can then do this uniformly over all of $\Y$ (after perhaps shrinking $B$ independently of $\epsilon$). The dependence on $\epsilon$ then comes from going from this operator to the one where we also consider functions that are not of average $0$.

With this in place, we can improve the approximate solution as in Section \ref{sec:improving}. For every $y \in \Y^T$, there is a 
$$
\Gamma_y = - d^{4-2n} + \widetilde \Gamma_y,
$$
where $\widetilde{\Gamma}_y$ is $O(d^{5-2n})$ such that on each fibre, we have
$$
\mathcal{D}^* \mathcal{D} \Gamma_y = c \mu (y) + \frac{c}{\Vol(X)} - c \delta_{y}.
$$
We then modify $\Omega^1_{\epsilon}$ by the function $\Gamma$ whose restriction to $\pi^{-1}(y)$ is $\Gamma_y$ by letting
$$
\Omega^2_{\epsilon} = \Omega^1_{\epsilon} + \ddb\left(\epsilon^{2n-2} \gamma_1  \Gamma \right).
$$

Letting $\Omega^i_{\epsilon,y}$ denote the restriction of $\Omega_{\epsilon}$ to $\X_y$ we have
\begin{align*}
S(\Omega^2_{\epsilon, y}) =& S(\Omega^1_{\epsilon, y}) + L_{\Omega^1_{\epsilon, y}} (\epsilon^{2n-2} \gamma_1  \Gamma) + R_{\Omega^1_{\epsilon,y}}(\epsilon^{2n-2} \gamma_1  \Gamma)  \\
=&S(\Omega^1_{\epsilon, y}) + L_{\Omega^1_{\epsilon, y}} (\epsilon^{2n-2} \gamma_1  \Gamma) + O(\epsilon^{2n-1}),
\end{align*}
where $R_{\omega_{\epsilon,p}}$ is the non-linear part of the scalar curvature operator.

Again this is an improved approximate solution to the fibrewise equation. Let $h'_{\epsilon,y}$ denote the potential with respect to $\Omega^1_{\epsilon}$ associated to $S_V(\Omega_0) + \epsilon^{2n-2} h_y \in \bar \mfk^T$, i.e. the potential corresponding to $\xi_y + \epsilon^{2n-2} \mu (y) \in \mfk^T$, and let $\xi'_{\epsilon,y}$ denote the corresponding real holomorphic vector field. We then obtain the analogue of Lemma \ref{lem:approxsoln}, that is, after possibly shrinking $\Y$,  for all $0 < \epsilon \ll 1$ and for all $y \in \Y$,
\begin{align*}
\left\| S(\Omega^2_{\epsilon, y}) - \frac{1}{2} \xi'_{\epsilon,y}(\epsilon^{2n-2} \gamma_1  \Gamma_y) - h'_{\epsilon, y} \right\|_{C^{0,\alpha}_{\delta-4}} \leq C r_{\epsilon}^{4-\delta}.
\end{align*}
with $\delta$ chosen as in Lemma \ref{lem:approxsoln}.

We are now ready to perturb and obtain the main result of the section. As $\pi: \X^T \to \Y^T$ is a $K^T$-equivariant holomorphic submersion, we can define function spaces 
$\bar \mfk^T_{\pi,\epsilon}$ precisely as in the previous cases. These depend on a relatively K\"ahler metric on $\X^T$ that we write as $\Omega^1_{\epsilon} + \ddb \phi_{\epsilon}$ for some $\phi_{\epsilon}$. The space then consists of the functions on $\X^T$ whose restriction to a fibre equals the restriction of some $h_{\epsilon} + u (\phi_{\epsilon})$ to the fibre, where $h_{\epsilon}$ is the potential of $u \in \mfk^T$ with respect to $\Omega^1_{\epsilon}.$ Note that $u$ can depend on $y$ and $\epsilon$.

We in addition define function spaces $\bar \mfk^T_{V,\epsilon}$ and what we actually solve is that $S_V(\Omega_{\epsilon}) \in \bar \mfk^T_{V,\epsilon}$. This is analytically more straightforward than employing $\bar \mfk^T_{\pi,\epsilon}$ directly. Writing $\Omega_{\epsilon}$ as $\Omega_{\epsilon}^1+\ddb\phi_{\epsilon}$ and letting $\phi_{\epsilon,y}$ denote the restriction of $\phi_{\epsilon}$ to $\X_y$, $\bar\mfk^T_{V,\epsilon}$ consists of the functions on $\X$ whose restriction to a fibre $\X_y$ equals 
$$
h_{\epsilon} + \frac{1}{2} \langle u, \nabla \phi_{\epsilon, y} \rangle
$$
for some $u \in \mfk^T$ that can depend on $y$ and $\epsilon$. As above, $h_{\epsilon}$ is the potential of $u$ with respect to $\Omega^1_{\epsilon}$ restricted to $\X_y$. We use the relatively K\"ahler metric $\Omega^1_{\epsilon}$ to compute the above gradient and inner product on the fibre $\Y_y$.

The main result is then the following.
\begin{theorem}
\label{thm:semistablefibrewisesoln}
Suppose $n>2$. 
After possibly shrinking $B$, 
we have that for all $0 < \epsilon \ll 1$ and for all $y \in \Y^T$  
there exists $\phi_{\epsilon,y} \in C^{\infty}(\X^T_y)$ and $h_{\epsilon, y}$ corresponding to a vector field $u_{\epsilon,y} \in \mfk^T$ such that
$$
S(\Omega_{\epsilon,y}+\ddb \phi_{\epsilon,y}) - \frac{1}{2} \langle u_{\epsilon,y}, \nabla(\phi_{\epsilon,y}) \rangle - h_{\epsilon,y} = 0.
$$
The potential $ \frac{1}{2} \langle u_{\epsilon,y}, \nabla(\phi_{\epsilon,y}) \rangle + h_{\epsilon,y}$ admits an expansion
$$
 \frac{1}{2} \langle u_{\epsilon,y}, \nabla(\phi_{\epsilon,y}) \rangle+ h_{\epsilon,y}= S_V(\Omega) + \epsilon^{2n-2} \left(c \mu(y) + \frac{c}{\Vol(X)} \right) + O(\epsilon^{2n-1}) + \gamma_2 O(\epsilon^2),
$$
where the $O(\epsilon^{2n-1})$-term is over the whole of $\X_y$ and the term $\gamma_2 O(\epsilon^2)$ is an $O(\epsilon^2)$-function supported on $B_{2r_{\epsilon}}$. 

In the case $n=2$, the same holds except 
that in the expansion of the potential the $O(\epsilon^{2n-1})$-term is $O(\epsilon^{2n-2+\theta})$ for some $\theta >0$.

Finally, the function $\phi_{\epsilon}$ on $\X^T$ whose restriction to the fibre $\X_y$ is $\phi_{\epsilon, y}$ is smooth. Thus if we write $\Omega_{\epsilon} = \Omega^1_{\epsilon} + \ddb \phi_{\epsilon}$, then $S_V(\Omega_{\epsilon}) \in \bar \mfk^T_{V,\epsilon}$.
\end{theorem}

\subsection{The Kempf--Ness argument}\label{semistableKN} Having established the main analytic results in this setting, we turn to the geometry. Consider the forms $\Omega_{\epsilon} \in c_1(\L_{\epsilon})$ which are relatively K\"ahler on the submersion  $\X^T \to \Y^T$, and the associated Weil--Petersson form $\Omega_{\Y,\epsilon}$ on $\Y^T$ produced through a fibre integral over $\X^T\to \Y^T$. 

Let $\mu_{\epsilon}: \X \to (\mfk^T)^*$ be the moment map for the $K^T$-action on $(\X,\Omega_{\epsilon})$. From the setup, we obtain a moment map $\sigma_{\epsilon}: \Y^T \to (\mfk^T)^*$ for the $K_T$-action on $(\Y, \Omega_{\Y,\epsilon})$, which takes the form $$\langle \sigma_{\epsilon},u\rangle(p) = \int_{\Bl_{p}\Y_{\pi(p)}}\langle \mu_{\epsilon},u\rangle (S_V(\Omega_{\epsilon}) - \hat S_V) \Omega_{\epsilon}^n.$$ We have two relevant function spaces; the first is $\bar\mfk^T_{\epsilon, \pi,p}$ which is defined to be the span of the $\langle \mu_{\epsilon},u\rangle|_{\Bl_{p}\Y_{\pi(p)}}$, while the second $\bar\mfk^T_{\epsilon,V,p}$ satisfies $$S_V(\Omega_{\epsilon}) - \hat S_V \in \bar\mfk^T_{\epsilon, V,p}$$ by Theorem \ref{thm:semistablefibrewisesoln}.

We fix a point $p\in X^T$, which we view as a point in the corresponding fibre of $\X^T\to \Y^T$. We then obtain a unique extremal vector field $\xi_{\epsilon} \in \mft$ on the blowup $(\Bl_pX,L_{\epsilon}$ which is independent of point in the orbit $T^{\C}.p$ (this is where we use that by hypothesis $T$ induces a maximal torus in $\Aut_0(X,L)_p$. As before, we may further expand this extremal vector field in powers of $\epsilon$. We begin with the following analogues of results proven in the extremal case in Section \ref{moment-map-geometry}.

\begin{proposition} The following hold:
\begin{enumerate}
\item  A zero of the moment map $\sigma_{\epsilon}$ is an extremal metric;
\item The moment map is a moment map with respect to a K\"ahler metric.
\end{enumerate}
 \end{proposition}
 
 These are proven similarly to the extremal case, so we omit the proofs. The key new point in the first part, for example, is the presence of an $O(|b|)$-term (measuring the variation of $\Omega$ on the fibres $\Y_b$) that we can ensure is as small as we would like if we shrink $B$. Indeed, the projection operator as considered in  Lemma \ref{OS-lemma}  takes the form
\begin{align*}
\sum_{i,j=1}^k a_i \frac{\left(\int_{\X_y} \left(h_{\epsilon,i} + u_i(\phi_{\epsilon,y})\right)\Omega_{\epsilon,y}^n \right) \left( \int_{\X_y} (h_{\epsilon,j} + u_j(\phi_{\epsilon})) \Omega_{\epsilon,y}^n \right)}{\left( \int_{\X_y}(h_{\epsilon,j} + u_j(\phi_{\epsilon}))^2 \Omega_{\epsilon,y}^n \right)^{\frac{1}{2}}}
\end{align*}
and as we have remarked in the previous section, the $h_{\epsilon,i}$ now expand in the same way as in Lemma \ref{OS-lemma}, up to an $O(|b|)$-term. Thus when $|b|$ is sufficiently small, the projection is an isomorphism. So after possibly shrinking $\Y$ yet again, we get that a zero of the moment map is equivalent to the metric on the fibre being extremal. 

For the second part, the key point is that the moment maps satisfy
\begin{align*}
\langle \sigma_{\epsilon},u \rangle(p) =& \int_{\Y_{\pi(p)}}\langle \mu_{0},w\rangle (S_V(\Omega_0) - \langle\mu_0, \xi_{\epsilon}\rangle - \hat S_{V,0}) \Omega_{0}^n \\
&+ \epsilon^{2n-2}( \langle \mu_0,u\rangle(p) - \langle \xi',u\rangle_0)+O(\epsilon^{2n-2+\theta}),
\end{align*} 
where now $S_V(\Omega_0) -\langle\mu_0, \xi_{\epsilon}\rangle-  \hat S_{V,0}$ is nonconstant as we do not begin with an extremal metric. Note that $\xi'$ is still the $O(\epsilon^{2n-2})$-term in the expansion of the extremal vector field---this is constant as we have assumed the torus $T$ is a maximal torus in $\Aut_0(X, \alpha)_p$. Now, $\int_{\Y_{\pi(p)}}\langle \mu_{0},w\rangle (S_V(\Omega_0) - \langle\mu_0, \xi_{\epsilon}\rangle - \hat S_{V,0}) \Omega_{0}^n$ is a moment map for the pullback to $\Y$ of the form $\Omega_B$ on $B$ (and in fact, the $\epsilon^0$-term of $\Omega_{\Y, \epsilon}$ \emph{is} $\Omega_B$). The replacement K\"ahler metric on $\Y$ then takes the form, for $\theta>0$ (and $\theta=1$ when $n\geq 3$) $$\Omega_B + \epsilon^{2n-2}\Omega_{0} + O(\epsilon^{2n-2+\theta}).$$ The proof that we can replace the actual form with this is as in Lemma \ref{lem:formexp}, where we see that the key new difference is the form of the expansion at $0$. Note that $\mu_0$ is the moment map on $\Y$, and this is why we see the term $\Omega_0$ at order $\epsilon^{2n-2}$ above.

  Thus our geometry has returned us to the situation where we wish to find a zero of the moment map in the given $G^T$-orbit of $p$.  The first issue we must overcome is that the manifold $\Y$ is not compact, and to appeal to a version of the Kempf--Ness theorem we will require compactness. Here we use our projectivity hypothesis---namely that $\L$ is relatively ample---to embed $\Y \to B$ into a product of projective spaces, $K$-equivariantly. To do so, we consider the sequence $\pi_*\L^{\otimes r}$ of pushforwards; by relative ampleness, these are vector bundles for $r\gg 0$ admitting a $K$-action.  
\begin{lemma}
The vector bundle $\pi_*(\L^{\otimes r})$ may be equivariantly trivialised in a neighbourhood of $0 \in B$.
\end{lemma}

\begin{proof}
We expect this to be well-known, so we merely sketch the proof, following the approach of Segal in the smooth setting \cite[Section 1]{segal}. Consider the trivial bundle $$B\times \pi_*(\L^{\otimes r})_0 = B\times H^0(\Y_0,r\L_0)$$ over $B$; note that $0\in B$ is a fixed point of the $K$-action, so that this trivial bundle is also admits a $K$-action. The restriction if this trivial bundle to $0\in B$ is $K$-equivariantly isomorphic to the restriction of $\pi_*(\L^{\otimes r})$ over $0$ (by definition), giving a $K$-invariant section $e_0$ of the $\Hom$-bundle $$ \Hom(\pi_*(\L^{\otimes r}), B\times H^0(\Y_0,\L^{\otimes r}_0))|_0.$$ We may extend $e_0$ to a holomorphic section of the bundle $\Hom(\pi_*(\L^{\otimes r}), B\times H^0(\Y_0,\L^{\otimes r}_0))$ by trivialising, and may further extend it in a $K$-invariant manner by averaging over the compact Lie group $K$ (through the Haar measure), producing a holomorphic $K$-invariant section $e$ of $\Hom(\pi_*(\L^{\otimes r}), B\times H^0(\Y_0,\L^{\otimes r}_0))$. The section $e$ is invertible at $0\in B$, and as this is an open condition, it is invertible in a neighbourhood of $0$, producing an equivariant trivialisation of the bundle $\pi_*(\L^{\otimes r})$. \end{proof}

By choosing such an equivariant trivialisation, we may $K$-equivariantly embed $\Y$  into $B\times \pr(H^0(X_0,rL_0))$. Since $B$ is a subspace of a vector space, we may further embed $B$ into a projective space in a $K$-equivariant manner, by linearity of the $K$-action. Thus produces a $K$-equivariant embedding of $\Y$ into a product of projective spaces, and in turn we may use the Segre embedding to equivariantly embed $\Y$ as a submanifold of a single projective space $\pr^N$.

\begin{corollary} For any $\epsilon$ with $\Omega_{\epsilon}$ K\"ahler, perhaps after shrinking $B^T$ and restricting $\Y^T$, there is a $K$-invariant K\"ahler metric $\Omega_{\pr^N, \epsilon}$ on $\pr^N$ and a $K^T$-equivariant holomorphic embedding $\Phi: \Y^T \to \pr^N$ such that $\Phi^*\Omega_{\pr^N} = \Omega_{\epsilon}$. Furthermore, there is a moment map for the $K^T$-action on $(\pr^N,\Omega_{\pr^N, \epsilon})$ whose restriction to $\Y^T$ is $\sigma_{\epsilon}$.
\end{corollary}

\begin{proof}
We have already constructed the embedding $\Phi$, so we merely wish to extend the K\"ahler metric $\Omega_{\epsilon} $ from $\Y^T$ to $\pr^N$ in a $K^T$-invariant manner, perhaps after shrinking $B^T$. The analogous extension result is well-known without  assuming $K^T$-invariance, see for example Coman--Guedj--Zeriahi \cite[Proposition 2.1]{CGZ}. Averaging the resulting extension over $K^T$ produces a $K^T$-invariant extension. Extending the moment map is then standard by uniqueness results for moment maps, see e.g. \cite[Lemma 4.7]{DMS}. \end{proof}

We now wish to appeal to a version of the Kempf--Ness theorem. As we are interested in a local version of this result, involving  information only in $\Y^T$ (which is noncompact) rather than the overlying projective space, we employ the gradient flow approach to the Kempf--Ness theorem. This gradient flow is the flow $$\frac{d}{dt}p(t) = -Jv_{\sigma_{\epsilon}(p(t))},$$ where $\sigma_{\epsilon}$ is the moment map, $J$ is the almost complex structure on $\Y^T$ and $v_{\sigma_{\epsilon}(p(t))} \in T_{p(t)}\Y^T$ is the associated tangent vector. This is identical to the corresponding flow on $\pr^N$.

We next consider the corresponding geometry for $\epsilon=0$, namely the flow on $(B,\omega_B)$, starting at $b = \pi(p)$. By Ortu \cite[Proposition 2.4]{ortu}, perhaps after shrinking $B$, we may assume that the flow converges to a zero of the moment map $b_{\infty} \in \overline{G.b} \cap B$, which is an extremal metric (see \cite[Theorem A.1]{ortu} for related results). In the cscK case, it then follows from Chen--Sun \cite{chen-sun} that $b_{\infty}=0 \in B$, so that $0\in \overline{G.b}$; this sort of result will be important in controlling the flow for $\epsilon>0$. In general, we wish to replace $B$ with a $B'$ such that the origin is in the closure of the orbit of a point corresponding to $b$. 

To be more precise, the point $b_{\infty}$ is GIT polystable in $\pr^N$ (as it is a zero of the moment map), so we appeal to the Luna slice theorem \cite{luna} (see also Wang \cite[Theorem 2.5]{wang} for a more analytic perspective). This produces a complex manifold $B'$ with a $K^T_{b_{\infty}}$-action and a $K^T_{b_{\infty}}$-equivariant holomorphic map $\Phi: B'\to B$, such that by construction $B'$ is a submanifold of a vector space with a linear $K^T_{b_{\infty}}$-action, $\Phi(0) = b_{\infty}$ and crucially there is a point $b' \in B'$ with $\Phi(b') = b$ and $0 \in \overline{G^T_{b_{\infty}}.b'},$ where $G^T_{b_{\infty}} = (K^T_{b_{\infty}})^{\C}$ is reductive (as $b_{\infty}$ is polystable). We may pull back the universal family $\Y$ to $B'$ to produce a $K^T_{b_{\infty}}$-equivariant family $\Y'\to B'$. We may further assume, by the same analytic arguments as before, that for this new family, the fibrewise scalar curvature lies in the associated function space isomorphic to trivial bundle with fibre $\Lie K_{b_{\infty}}$ over all of $(\Y')^T$. To simplify notation, we simply replace $B$ with $B'$ and use the notation $\Y\to B$, where now $0\in \overline{G^T.b}$. Note that $0$ is the only fixed point of $G^T$ in the closure of the orbit of $b'$, since fixed points are polystable, and polystable elements in orbit closures are unique.

We use the conclusion that $0\in \overline{G^T.b}$ to understand the geometry of the moment map flow. We assume that $T$ is actually trivial, as we may reduce to this situation by projecting the moment maps orthogonally to $\mft^*$ if not (and the resulting argument is then identical), as in Ortu \cite[Section 2.1]{ortu}.

\begin{lemma}
\label{lem:flowbound}
For all 
$0<\epsilon\ll 1$, there is a $\delta>0$ such that if $p=p(0)$ satisfies 
$|\pi(p)| \leq \delta$, then $|\pi(p(t))| \leq \delta$ for all $t\geq 0$. 
\end{lemma}

\begin{proof} We first explain the situation when $\epsilon = 0$. Recall that the K\"ahler form we have on $\Y$ is given by 
$$\Omega_B + \epsilon^{2n-2}\Omega_{0} + O(\epsilon^{2n-2+\theta}).$$
Since the map $\Y \to B$ is $K$-equivariant, this implies that the flow when $\epsilon = 0$ is such that $\pi(p(t)) = b(t)$, where $b(t)$ is moment map flow associated to $\Omega_B$ on $B$. In other words, the flow on $\Y$ (or really its compactification) covers the flow on the (compactification) of $B$ when $\epsilon = 0$. 

Moreover, as in \cite[Proposition 4.5]{DMS}, we have the bound 
$$
- \left( \frac{d}{dt} (b(t)) \right). b(t) \geq c |q(b(t))|^4,
$$ 
where $q$ is the projection from the tangent space $T_b B$ to the tangent space of its $G$-orbit and on the left hand side we are taking the Euclidean inner product in the vector space $B$ is a a ball in. Now, as $0$ is the only fixed point of the $K$-action in the closure of the $G$-orbit of $b$ in $B$, we can as in \cite[Proposition 4.5]{DMS} mutually bound $|q(b)|$ and $|b|$ on the annulus $B_{\delta} \setminus B_{\delta/2} \cap \overline{G.b}$. The upshot is that
$$
- \left( \frac{d}{dt} (b(t)) \right). b(t) \geq c |b(t)|^4
$$  
on the annulus $(B_{\delta} \setminus B_{\delta/2}) \cap \overline{G.b}$ in the closure of the orbit of $b$.

The moment map agrees with the above up to a term which is $O(\epsilon^{2n-2})$. In fact, infinitesimally, the change in $\pi(p(t))$ is given by the \emph{horizontal} part of $\frac{d}{dt}(p(t)).$ Moreover, $\Omega_{0}$ equals $\omega$ over the central fibre and equals this up to a term which is $O(|b|)$ in general. This implies that this term is $O(\epsilon^{2n-2}|b|)$, and so
\begin{align*}
- \left( \frac{d}{dt} (\pi(p(t))) \right). \pi(p(t)) \geq& c |\pi(p(t))|^4- C\epsilon^{2n-2} |\pi(p(t))|^3 
\end{align*}
for any $p(t) \in \pi^{-1}(B_{\delta} \setminus B_{\delta/2}) \cap \overline{G.p}$. Thus for any sufficiently small $\epsilon$, 
$$
 \left( \frac{d}{dt} (\pi(p(t))) \right). \pi(p(t)) = \frac{d}{dt} \left( |\pi(p(t))|^2 \right)
 $$
 is \emph{negative} for any $p(t)$ such that $|\pi(p(t))| \leq \delta$. 
 In particular, 
 $|\pi(p(t))| \leq \delta$ all $t$ if this holds at $t=0$, as required.
\end{proof}

 This means that, after shrinking $B$, we may assume that the flow $p(t)$ lies in $\Y$ for all $t\geq 0$. 
 This allows us to apply the following result, which applies as $\Y$ is $K$-equivariantly embedded in a projective space such that the K\"ahler metric and moment map extend.

\begin{theorem}\cite[Corollary 4.14]{DMS} For  all $0<\epsilon  \ll 1$ either the flow converges to a point $p_{\infty} \in G^T.p \cap \Y$ satisfying $\sigma_{\epsilon}(p_{\infty})= 0$, or there is a $\lambda: \C^*\hookrightarrow G^T$ such that $\lim_{t\to 0} \lambda(t).p = q$ with  $q\in \Y$ and with $$\langle \sigma_{\epsilon}(q), v_{\lambda}\rangle <0,$$ where $v_{\lambda} \in \mfk^T$ exponentiates to $\lambda$.
\end{theorem}

\begin{remark}\label{rmk:proj}The difference with the usual Kempf--Ness theorem is that we may conclude $q\in \Y$, which is nontrivial as $\Y$ is noncompact. Appealing to this result is the reason we must assume $X$ is projective in the current section, as it relies on embedding $\Y$ into a projective space.
\end{remark}

\begin{corollary}

If $(\Bl_pX,\alpha_{\epsilon})$ is relatively K-stable with respect to the extremal vector field, then $c_1(L_{\epsilon})$ admits an extremal metric.

\end{corollary}

\begin{proof}

The $\C^*$-action $\lambda$ induces a test configuration in such a way that the value $\langle \sigma_{k,\epsilon}(q), v_{\lambda}\rangle$ agrees with the associated relative Donaldson--Futaki invariant defined in Section \ref{sec:rel-kstable}, and relative K-stability means this must be nonnegative, with equality if and only if $(\Bl_pX,c_1(L_{\epsilon})) \cong (\Bl_{p_{\infty}}X,c_1(L_{\epsilon}))$, which implies the result. Thus if $(\Bl_pX,c_1(L_{\epsilon}))$ is relatively K-stable, then the point $p_{\infty}$ must satisfy  $\sigma_{\epsilon}(p_{\infty})= 0$ and must also satisfy $(\Bl_{p_{\infty}}X,c_1(L_{\epsilon})) \cong (\Bl_pX,c_1(L_{\epsilon}))$. It follows  that $(\Bl_{p_{\infty}}X,c_1(L_{\epsilon}))$ admits an extremal metric as $\sigma_{\epsilon}(p_{\infty})$ vanishes, so since $(\Bl_{p_{\infty}}X,c_1(L_{\epsilon})) \cong (\Bl_pX,c_1(L_{\epsilon}))$, this implies that  $(\Bl_pX,c_1(L_{\epsilon}))$ admits an extremal metric.
\end{proof}

One can use the results of Section \ref{GIT-section} to obtain a more explicit GIT characterisation; we omit the details.

\subsection{Applications} As the results in the K-semistable case are general and quite technical, we end the paper with a concrete application to a K-semistable manifold with $\Aut(X,L)$ is discrete. It follows from the results of Section \ref{GIT-section} that if we blow up a point $p\in X$ which is GIT stable, viewed as a point in $\Y$ (the universal family over the Kuranishi space), then $(\Bl_pX,L_{\epsilon})$ admits a cscK metric. 

\begin{theorem}\label{thm:open}
Suppose that $(\Y_0,\L_0)$ satisfies the condition that $\Aut_0(\Y_0,\L_0) \cong \C^*$. Then the blowup $(\Bl_pX,L_{\epsilon})$ of a general point on $p\in X$ admits cscK metrics for all $0<\epsilon\ll 1$.
\end{theorem}

\begin{proof}

In this case, we may perform the arguments of Sections \ref{sec:semistable-gluing} and \ref{semistableKN} on the test configuration $(\Y,\L)$ for $(X,L)$ with central fibre $(\Y_0,\L_0)$. The claim then follows by choosing a point in $(\Y_0,\L_0)$ and using Zariski openness of the stable locus, meaning that a general point in $X$ is actually GIT stable, implying $c_1(L_{\epsilon})$ admits cscK metrics for all $0<\epsilon\ll 1$.

\end{proof}

\begin{example} Note that Theorem \ref{thm:open} can be applied to give many new examples of manifolds admitting extremal metrics. Indeed, there are now known many explicit examples of strictly K-semistable Fano threefolds, that admit a degeneration to a K-polystable Fano (see \cite{fanothreefolds} and the references therein). In order to apply our construction, the central fibre of such a degeneration needs to be smooth. Theorem \ref{thm:open} then guarantees the existence of a cscK metric provided the reduced automorphism group is $\C^*$. This holds for certain members of the family 1.10 of the Mori--Mukai list of smooth Fano threefolds, which is the family that includes the Mukai--Umemura manifold (however note that the Mukai--Umemura manifold itself has larger reduced automorphism group). One can find other examples for instance in the families 2.20, 2.21, 2.22, 3.5, 3.8, 3.10, 3.12 and  4.13 of the Mori--Mukai list. Some of these are infinite families to which the construction applies.
\end{example}

\vspace{4mm}

\end{document}